\newsavebox{\@brx}
\newcommand{\llangle}[1][]{\savebox{\@brx}{\(\m@th{#1\langle}\)}%
	\mathopen{\copy\@brx\kern-0.5\wd\@brx\usebox{\@brx}}}
\newcommand{\rrangle}[1][]{\savebox{\@brx}{\(\m@th{#1\rangle}\)}%
	\mathclose{\copy\@brx\kern-0.5\wd\@brx\usebox{\@brx}}}
\newtheorem{theorem}{Theorem}[section]
\newtheorem{corollary}[theorem]{Corollary}
\newtheorem{lemma}[theorem]{Lemma}
\newtheorem{proposition}[theorem]{Proposition}
\theoremstyle{definition}
\newtheorem{definition}[theorem]{Definition}
\newtheorem{example}[theorem]{Example}
\newtheorem{remark}[theorem]{Remark}
\numberwithin{equation}{subsection}
\newtheorem*{ack}{Acknowledgement}
\newcommand{\Conj}{\operatorname{Conj}}
\newcommand{\Core}{\operatorname{Core}}
\newcommand{\Hom}{\operatorname{Hom}}
\newcommand{\Map}{\mathrm{Map}}
\newcommand{\id}{\mathrm{id}}
\newcommand{\ie}{\mathrm{i.e.}}
\begin{document}
	\setcounter{section}{0}
	\title{Automorphisms, cohomology and extensions of symmetric quandles}
	\author{Biswadeep Karmakar}	
	\author{Deepanshi Saraf}
	\author{Mahender Singh}
	
	\address{Department of Mathematical Sciences, Indian Institute of Science Education and Research (IISER) Mohali, Sector 81,  S. A. S. Nagar, P. O. Manauli, Punjab 140306, India.}
	\email{biswadeep.isi@gmail.com}
	\email{saraf.deepanshi@gmail.com}
	\email{mahender@iisermohali.ac.in}
	
	\subjclass[2020]{Primary 57K10, 57K12; Secondary 57K45}
	
	\keywords{automorphism, cohomology, quandle cocycle, quandle module, rack module, symmetric quandle, symmetric rack}
	
\begin{abstract}
It is well-known that the cohomology of symmetric quandles generates robust cocycle invariants for unoriented classical and surface links. Expanding on the recently introduced module-theoretic generalized cohomology for symmetric quandles, we derive a four-term exact sequence that relates 1-cocycles, second cohomology, and a specific group of automorphisms associated with the extensions of symmetric quandles. This exact sequence shows that the obstruction to lifting and extending automorphisms is found in the second symmetric quandle cohomology. Additionally, some general aspects of dynamical cocycles and extensions are discussed.
\end{abstract}
\maketitle
	
\section{Introduction}
Quandles are algebraic structures that systematically represent the three Reidemeister moves in planar diagrams of oriented links within the 3-sphere. When extended to racks, they become strong algebraic tools for addressing codimension two embeddings in any $n$-sphere \cite{MR2255194}. Their study gained significant momentum following the seminal works of Joyce \cite{Joyce1979} and Matveev \cite{MR0672410}. They showed that link quandles are complete invariants for non-split links, considering the orientation of the ambient space.
\par

In classical knot theory, having an orientation is often crucial, especially when dealing with quandle cocycle invariants. Kamada and Oshiro  \cite{MR2371714, MR2657689} addressed the challenge of eliminating the need for orientation by introducing symmetric quandles and their homology groups. They defined a quandle cocycle invariant for an unoriented link using a given symmetric quandle 2-cocycle, and showed that aligns with the quandle cocycle invariant of an oriented link when an orientation is arbitrarily assigned. Similarly, an invariant for unoriented surface links has been defined using symmetric quandle 3-cocycles, yielding comparable results. Using 4-fold symmetric quandles, an invariant for 3-manifolds has been defined \cite{MR2945646, MR2821435}, and it has been proved that the Chern--Simons invariant of closed 3-manifolds can be interpreted as a quandle cocycle invariant. In \cite{MR2890467}, symmetric quandles have also been employed to define invariants for spatial graphs and handlebody-links. These results, along with further refinements (see, for example  \cite{MR4476068, MR2629767, MR4594919}), underscore the significance of symmetric quandles in low-dimensional topology.
\par

Motivated by \cite{MR1994219, MR2155522}, in \cite{KSS2024}, we examined these objects from a categorical perspective and determined the Beck modules  \cite{MR2616383} in the category of symmetric racks (respectively, quandles). Beck modules provide a quite general concept of a coefficient module for an associated (co)homology theory. They generalize the notion of coefficient modules from group cohomology, Lie algebra cohomology, and Hochschild cohomology of associative algebras. We formulated a generalised (co)homology theory for symmetric racks (respectively, quandles) with homogeneous coefficients, extending the cohomolgy theory introduced in \cite{MR2657689}.
\par

In this paper, we develop this extension theory further, and establish an exact sequence connecting automorphisms of symmetric quandles with their extensions and the generalised second cohomology. This work builds upon the paper \cite{MR4282648} by extending the results to symmetric racks and quandles, and utilizing more general module-theoretic coefficients developed in \cite{KSS2024}. The paper is organised as follows. In Section \ref{sectionpreliminaries}, some necessary preliminaries have been recalled. In Section \ref{section dynamical cocycles}, we introduce dynamical cocycles and extensions of symmetric racks and quandles (Proposition \ref{Dynamical cocycles for symmetric racks} and Proposition \ref{equivalence of dynamical extensions}). In Section \ref{sec dynamical cocycle and second cohomology}, we relate dynamical cocycles and ordinary cocycles of symmetric racks and quandles (Theorem \ref{dynamical cocycles vs symmetric cocycles}). In Section \ref{sec group extensions and dynamical extensions}, we prove that group extensions lead to some natural dynamical extensions of symmetric racks and quandles (Corollary \ref{conj core extension} and Corollary \ref{core extension}).  Finally, in Section \ref{sectionwellstype}, we prove our main result (Theorem \ref{MainTheoremWells}). More precisely, we prove that if $(X, \rho_X)$ is a symmetric rack,  $\mathcal{F}=(A, \phi, \psi, \eta)$ is an $(X, \rho_{X})$-module, $\alpha \in Z_{S R}^2((X, \rho_X); A)$ is a 2-cocycle and $E(\mathcal{F}, \alpha)$ is the abelian extension of $(X, \rho_X)$ by $\mathcal{F}$ through $\alpha$, then there is an exact sequence 
$$
0\to Z_{SR}^{1} \big((X, \rho_{X});A \big) \to \operatorname{Aut}_{A} \big(E(\mathcal{F}, \alpha), \rho_{E(\mathcal{F}, \alpha)} \big) \stackrel{\Gamma}{\to} \operatorname{Aut} (X, \rho_{X})\times\operatorname{Aut}(A) \stackrel{\Lambda_{[\alpha]}}{\to}H_{SR}^{2} \big((X,\rho_{X});A \big) 
$$
of groups. The exact sequence shows that the obstruction to lifting and extension of automorphisms lies in  the second symmetric rack cohomology. There is a similar exact sequence for symmetric quandles as well.
\medskip

\section{Preliminaries} \label{sectionpreliminaries}
In this section, we quickly recall basic terminology that we shall need in latter sections. We refer the reader to \cite{KSS2024} for details. Throughout, all our racks and quandles will be right-distributive.
	\begin{definition}
		Let $(X,*)$ be a rack (respectively, quandle). A map $\rho_X:X \rightarrow X$ is called a \textit{good involution} if 
		\begin{enumerate}
			\item[(S1)] $\rho_X^2=\id_X$,
			\item[(S2)] $\rho_X(x \ast y)= \rho_X(x) \ast y$,
			\item[(S3)] $x \ast \rho_X(y) = x \ast^{-1}y$,
		\end{enumerate}
		for all $x, y \in X$.  The pair $(X, \rho_X)$ is referred as a \textit{symmetric rack (respectively, quandle)}.
	\end{definition}

We refer the reader to \cite{MR2371714,MR3363811, MR2657689,MR4594919} for more details on these structures and their applications in knot theory.

\begin{example}
Below we list some elementary examples:
\begin{enumerate}
\item Every involution on a trivial quandle is a good involution.
\item For each $n \ge 1$, if $X= \Conj_n(G)$ is the $n$-conjugation quandle of a group $G$, then $\rho_X:X \rightarrow X$  given by $\rho_X(x)=x^{-1}$ is a good involution. We denote this symmetric quandle by $(\Conj_n,\mathrm{inv}).$
			\item If $X= \Core(G)$ is the core quandle of a group $G$, then the identity map $\id:X \rightarrow X$ is a good involution. Also, if $z \in G$ is a central element of order two, then $\rho_X:X \rightarrow X$ given by $\rho(y)=y z$ is another good involution of $X$. We denote these symmetric quandles by $(\Core(G),\id)$ and $(\Core(G),z)$, respectively.
		\end{enumerate}
	\end{example}	
	
\begin{definition}
A \textit{symmetric rack (respectively, quandle) homomorphism} is a map $f:(X,\rho_X) \rightarrow (Y, \rho_Y)$ such that 
$$f(x \ast y)=f(x) \ast f(y) \quad \textrm{and} \quad f \big(\rho_X(x) \big)=\rho_Y \big(f(x) \big)$$ for all $x,y \in X$. A bijective symmetric rack (respectively, quandle) homomorphism is called a \textit{symmetric rack (respectively, quandle) isomorphism}.
	\end{definition}
	
Let us recall the definition of a module over a symmetric rack, introduced in \cite[Definition 3.1]{KSS2024} through the language of trunks.

\begin{definition}
Let $(X,\rho_X)$ be a symmetric rack. An {\it $(X,\rho_X)$-module}, denoted by $\mathscr{F}=(A,\phi,\psi,\eta)$, is a collection of abelian groups $\{A_x \mid x\in X\}$ together with group isomorphisms $\phi_{x,y}:A_x \rightarrow A_{x \ast y}$, and group homomorphisms $\psi_{x,y}:A_y \rightarrow A_{x \ast y}$ and $\eta_x:A_x \rightarrow A_{\rho_X(x)}$ such that 
	\begin{enumerate}[(M1)]
		\item \label{M1} $\phi_{x \ast y,z} \,\phi_{x,y}=\phi_{x \ast z,y \ast z}\,\phi_{x,z}$,
		\item \label{M2} $\phi_{x \ast y,z}\,\psi_{x,y}=\psi_{x \ast z,y \ast z}\,\phi_{y,z}$,
		\item \label{M3} $\eta_{\rho_X(x)} \,\eta_x = \id$,
		\item \label{M4} $\eta_{x \ast y} \,\phi_{x,y}= \phi_{\rho_X(x),y} \,\eta_x$,
		\item \label{M5} $\psi_{\rho_X(x),y}=\eta_{x \ast y} \,\psi_{x,y}$,
		\item \label{M6}$\phi_{x \ast^{-1}y,y}\, \phi_{x, \rho_X(y)}=\id$,
	    \item \label{M7} $\psi_{x\ast y,z}(a)= \phi_{x\ast z,y\ast z}\, \psi_{x,z}(a)+\psi_{x\ast z,y\ast z} \,\psi_{y,z}(a)$,
		\item \label{M8} $\phi_{x\ast^{-1} y,y}\psi_{x,\rho_X(y)}(\eta_{y}(b))=-\psi_{x\ast \rho_X(y),y}(b)$
	\end{enumerate}
	hold for all $x,y, z \in X$, $a \in A_z$ and $b \in A_y$.
	\par

If $(X,\rho_X)$ is a symmetric quandle, then $\mathscr{F}$ additionally satisfies the condition
	\begin{enumerate}\label{M9}
		\item [(M9)] $\phi_{x,x}(a)+\psi_{x,x}(a)=a$
	\end{enumerate} 
	for all $x \in X$ and $a\in A_x$.
\end{definition}
	
Next, we recall the definition of a symmetric rack algebra \cite[Definition 6.1]{KSS2024}. 

\begin{definition}
		The \textit{symmetric rack algebra} of a symmetric rack $(X, \rho_X)$, denoted by $\mathbb{Z}(X,\rho_X)$, is the associative $\mathbb{Z}$-algebra which is generated by the set $\{\phi_{x,y}^{\pm 1}, \, \psi_{x,y}, \,\eta_x \mid x,y \in X \}$ and admits the following defining relations: 
		\begin{enumerate}\label{axiom}
			\item[(A1)] $\phi_{x,y} \, \phi_{x,y}^{-1}=\phi_{x,y}^{-1} \,\phi_{x,y}=1$,
			\item[(A2)] $\phi_{x \ast y,z} \,\phi_{x,y}=\phi_{x \ast z, y \ast z} \,\phi_{x,z}$,
			\item[(A3)] $\phi_{x \ast y,z} \,\psi_{x,y}=\psi_{x \ast z, y \ast z} \,\phi_{y,z}$,
			\item[(A4)] $\eta_{\rho_X(x)} \,\eta_x=1$,
			\item[(A5)] $\phi_{\rho_X(x),y} \,\eta_x= \eta_{x\ast y} \, \phi_{x,y}$,
			\item[(A6)] $\psi_{\rho_X(x),y}= \eta_{x \ast y} \, \psi_{x,y}$,
			\item[(A7)] $\phi_{x \ast \rho_X(y),y} \,\phi_{x, \rho_X(y)}=1$,
			\item[(A8)] $\psi_{x \ast y,z}=\phi_{x \ast z, y \ast z} \,\psi_{x,z}+\psi_{x \ast z, y \ast z}\, \psi_{y,z}$,
			\item[(A9)] $\phi_{x \ast \rho_X(y),y} \,\psi_{x, \rho_X(y)} \,\eta_y =- \psi_{x \ast \rho_X(y), y}$
		\end{enumerate}
	for all $x,y,z \in X.$
\par
The \textit{symmetric quandle algebra} of a symmetric quandle $(X, \rho_X)$ is the symmetric rack algebra of the underlying symmetric rack which satisfies the additional relation
		\begin{enumerate}\label{Qaxiom}
			\item [(A10)] $\phi_{x,x}+\psi_{x,x}=1$
		\end{enumerate}
	for all $x \in X.$
\end{definition}

	Let $(X,\rho_X)$ be a symmetric rack. For each $n\geq0$, define $C_n(X,\rho_X):=\mathbb{Z}(X,\rho_X)X^n,~\ie$, the free left $\mathbb{Z}(X,\rho_X)$-module with basis $X^n$, where $X^0=\{p\}$ for some fixed $p \in X$. For the sake of convenience, we fix the notation 
	$$[x_1 \cdots x_n]:=\big(((x_1 \ast x_2) \ast x_3) \ast \cdots \ast x_n\big)$$ for any $(x_1, x_2, \ldots, x_n) \in X^n$. For each $n \geq 2$, let
	$\partial_n:C_{n}(X,\rho_X) \rightarrow C_{n-1}(X, \rho_X)$ be the $\mathbb{Z}(X,\rho_X)$-linear map defined on the basis as
	\begin{eqnarray*}
		\partial_n \big((x_1, \ldots, x_n) \big) &= &\sum_{j=2}^{n}(-1)^j (-1)^n \phi_{[x_1 \cdots \widehat{x_j} \cdots x_n],[x_j \cdots  x_n]}
		(x_1,\ldots, \widehat{x_j}, \ldots , x_n)\\
		& - &\sum_{j=2}^{n}(-1)^j (-1)^n (x_1 \ast x_j,\ldots,x_{j-1} \ast x_j, x_{j+1}, \ldots , x_n)\\
		& + &(-1)^n \psi_{[x_1 \widehat{x_2}x_3 \cdots x_n],[x_2 x_3 \cdots x_n]}(x_2,\ldots , x_n)
	\end{eqnarray*}
	and 
	$$\partial_1(x)=-\psi_{x \ast^{-1} p,p}(p).$$
Then, $(C_*(X,\rho_X),\partial_*)$ is a chain complex.
\par   
	
Next, we define a subcomplex of this chain complex as follows. For each $n \ge 1$, consider the left $\mathbb{Z}(X,\rho_X)$-submodule $D_n^{SR}(X,\rho_X)$ of $C_n(X,\rho_X)$ generated by $U_n \cup V_n$, where
	$$U_n= \big\{ \eta_{[x_1 \cdots x_n]}(x_1, \ldots, x_n)- (\rho_X(x_1),x_2, \ldots ,x_n ) \mid (x_1, x_2, \ldots, x_n) \in X^n \big\},$$

	\begin{eqnarray*}
		V_n &=& \bigcup_{i=2}^n \Big\{\big(\phi_{[x_1 \cdots \widehat{x_i} \cdots x_n],[x_i \cdots x_n]} (x_1 \ast x_i, \ldots , x_{i-1} \ast x_i, \rho_X(x_i),x_{i+1}, \ldots, x_n)+(x_1, \ldots, x_n) \big) \mid\\
		&&  (x_1, x_2, \ldots, x_n) \in X^n \Big\}.
	\end{eqnarray*}
for $n \ge 2$ and $V_1=\emptyset$. Then, by  \cite[Proposition 6.6]{KSS2024}, we have $\partial_n(D_{n}^{SR}(X,\rho_X)) \subset D_{n-1}^{SR}(X,\rho_X)$ for each $n \ge 1$.
\par

Set $C_n^{SR}(X,\rho_X)= C_n(X,\rho_X)/ D_n^{SR}(X,\rho_X)$, and let $A$ be a left $\mathbb{Z}(X,\rho_X)$-module. Define $$C^n_{SR} \big((X,\rho_X);A \big):=\Hom_{\mathbb{Z}(X,\rho_X)} \big(C_n^{SR}(X,\rho_X),A\big)$$ and let $\delta^n: C^n_{SR}((X,\rho_X);A) \to C^{n+1}_{SR}((X,\rho_X);A)$ be the induced coboundary map. Then, $\big(C^*_{SR}((X,\rho_X);A),\delta^*\big)$ is a cochain complex, leading to cohomology groups $H^*_{SR}((X,\rho_X);A)$. The group of $n$-cocycles and the group of $n$-coboundaries will be denoted by $Z_{SR}^{n}((X, \rho_{X});A)$ and $B_{SR}^{n}((X, \rho_{X});A)$, respectively.
\par

We can modify the preceding construction for symmetric quandles. Let $(X,\rho_X)$ be a symmetric quandle.  Define $D_n^{SQ}(X, \rho_X)$ to be the submodule of $C_n(X,\rho_X)$ generated by the $U_n \cup V_n \cup W_n$, where 
	$$W_n:=\bigcup_{i=1}^{n-1} \Big\{(x_1, \ldots, x_n)\in X^n \mid x_i=x_{i+1} \Big\}.$$
By \cite[Proposition 6.8]{KSS2024}, we have $\partial_n(D^{SQ}_{n}(X,\rho_X)) \subset D^{SQ}_{n-1}(X,\rho_X)$ for each $n \ge 2$. We can now define the homology and cohomology of symmetric quandles. Set $C_n^{SQ}(X,\rho_X)= C_n(X,\rho_X)/ D_n^{SQ}(X,\rho_X)$, and let $A$ be a left $\mathbb{Z}(X,\rho_X)$-module. We define $$C^n_{SQ} \big((X,\rho_X);A \big):=\Hom_{\mathbb{Z}(X,\rho_X)}\big(C_n^{SQ}(X,\rho_X),A \big)$$ and take $\delta^n$ to be the same map as in the case of symmetric racks. Then $\big(C^*_{SQ}((X,\rho_X);A),\delta^*\big)$ is a cochain complex yielding cohomology group $H^*_{SQ}((X,\rho_X);A)$. As before, the group of $n$-cocycles and the group of $n$-coboundaries will be denoted by $Z_{SQ}^{n}((X, \rho_{X});A)$ and $B_{SQ}^{n}((X, \rho_{X});A)$, respectively.

	\begin{remark} \label{cocycleconditions}
It follows from the  definition that, a map $\sigma: C_2^{SR}(X,\rho_X) \rightarrow A$, where $A$ is a left $\mathbb{Z}(X,\rho_X)$-module, is a symmetric rack 2-cocycle if and only if $\sigma$ satisfies the following conditions:
\begin{enumerate}
\item \label{cocyclecondition1}  $-\phi_{x *z,y *z}\sigma(x,z)+\phi_{x* y,z}\sigma(x,y)+\sigma(x*y, z)-\sigma(x *z, y*z)-\psi_{x* z,y* z}\sigma(y,z)=0$,
\item \label{cocyclecondition2.1} $\eta_{x *y}\sigma(x,y)-\sigma(\rho_X(x),y)=0$,
\item \label{cocyclecondition2.2} $\phi_{x,y}\sigma(x*y,\rho_X(y))+\sigma(x,y)=0$
\end{enumerate}
for all $x,y,z \in X$.
Furthermore,  $\sigma$ is a symmetric quandle 2-cocycle if it additionally satisfies $\sigma(x,x)=0$ for all $x \in X$.
\end{remark}

\begin{remark}
Before proceeding further, we set some notation for convenience.
\begin{itemize}
\item For maps $f:X \times X \to A$ and $g:X \to A$, we shall sometimes write $f_{x, y}$ and $g_x$ to denote $f(x, y)$ and $g(x)$, respectively.
\item In sections \ref{section dynamical cocycles}, \ref{sec dynamical cocycle and second cohomology} and \ref{sec group extensions and dynamical extensions}, we shall write a symmetric rack (respectively, quandle) as $(X, \rho)$ instead of $(X, \rho_X)$.
\end{itemize}
\end{remark}
\medskip

\section{Dynamical cocycles for symmetric quandles}\label{section dynamical cocycles}
A far-reaching generalization of quandle extensions using dynamical 2-cocycles has been provided in  \cite[Lemma 2.1]{MR1994219}. Below we present a more general and symmetric analogue of this construction.

\begin{proposition}\label{Dynamical cocycles for symmetric racks}
Let $(X,\rho)$ be a symmetric rack and $S$ be a collection of sets $\{S_x \mid x \in X\}$. Let $\alpha:X \times X \rightarrow \cup_{x,y \in X}\Map\big(S_x \times S_y,S_{x*y}\big)$  and $\beta:X \rightarrow \cup_{x \in X}\Map \big(S_x,S_{\rho(x)}\big)$ be two maps such that $\alpha({x,y}) \in \Map \big(S_x \times S_y, S_{x*y}\big)$ and $\beta(x) \in \Map\big(S_x,S_{\rho(x)}\big)$. We denote the image of $\alpha(x,y)(s,t)$  and $\beta(x)(s)$ by $\alpha_{x,y}(s,t)$ and $\beta_x(s)$, respectively. Then, the set $X \times S:=\{(x,s) \mid x \in X, \, s \in S_x\}$ is a symmetric rack with the binary operation
		$$(x,s)*(y,t)=\big(x*y,\alpha_{x,y}(s,t) \big)$$
		and the good involution $\rho_{\alpha,\beta}:X \times S \rightarrow X \times S$ given by
		\begin{equation*}
			\rho_{\alpha,\beta} \big((x,s) \big) = \big(\rho(x),\beta_x(s) \big)
		\end{equation*}
		if and only if the following conditions hold for all $x,y \in X$ and $s\in S_x, t \in S_y, w\in S_z$:
		\begin{enumerate}
			\item $\alpha_{x,y}(\textunderscore,t):S_x \rightarrow S_{x*y}$ is a bijection.
			\item $\alpha_{x*y,z} \big(\alpha_{x,y}(s,t),w \big)=\alpha_{x*z,y*z} \big(\alpha_{x,z}(s,w),\alpha_{y,z}(t,w) \big).$
			\item $\alpha_{\rho(x),y} \big(\beta_x(s),t \big)=\beta_{x*y} \big(\alpha_{x,y}(s,t) \big).$
			\item $\beta_{\rho(x)}\beta_x(s)=s.$
			\item $\alpha_{x,\rho(y)}(\beta_y(t))(s)=(\alpha_{x*^{-1}y,y}(t))^{-1}(s)$, where $\alpha_{x,y}(t)(s)=\alpha_{x,y}(s,t).$
		\end{enumerate}
	Furthermore, if $(X,\rho)$ is a symmetric quandle, then  $X \times S$ is a symmetric quandle if and only if $\alpha$ satisfies the additional condition: 
	\begin{enumerate}
		\item [(6)] $\alpha_{x,x}(s,s)=s$ for all $x \in X, s \in S_x.$
	\end{enumerate}
	
	\end{proposition}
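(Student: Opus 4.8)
The plan is to check, axiom by axiom, that $(X\times S,\rho_{\alpha,\beta})$ satisfies the defining conditions of a symmetric rack — and, in the quandle case, of a symmetric quandle — and to notice that in each such axiom the identity on the \emph{first} coordinate is precisely the corresponding identity in $(X,\rho)$, which holds by hypothesis. Consequently every axiom for $X\times S$ is \emph{equivalent} to an identity among the \emph{second} coordinates, and reading these off produces exactly conditions $(1)$--$(6)$; since the two coordinates decouple in this way, both directions of the biconditional are obtained from one and the same computation.

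I would organize the verification as follows. First, the rack axioms. Fixing $(y,t)$, the right translation $(x,s)\mapsto\big(x*y,\alpha_{x,y}(s,t)\big)$ of $X\times S$ is a bijection if and only if, for every $x$, the map $\alpha_{x,y}(t)\colon S_x\to S_{x*y}$, $s\mapsto\alpha_{x,y}(s,t)$, is a bijection — because $x\mapsto x*y$ is already a bijection of $X$; this is condition $(1)$. Granting $(1)$, the inverse operation on $X\times S$ is forced to be $(x,s)*^{-1}(y,t)=\big(x*^{-1}y,\ \alpha_{x*^{-1}y,y}(t)^{-1}(s)\big)$, a formula I would record for use in (S3). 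Next, expanding $\big((x,s)*(y,t)\big)*(z,w)$ and $\big((x,s)*(z,w)\big)*\big((y,t)*(z,w)\big)$: the first coordinates coincide by right self-distributivity in $X$, and equality of the second coordinates is precisely condition $(2)$. Hence $X\times S$ is a rack if and only if $(1)$ and $(2)$ hold.

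For the good involution: $\rho_{\alpha,\beta}^2(x,s)=\big(\rho^2(x),\beta_{\rho(x)}\beta_x(s)\big)$, so by $\rho^2=\id_X$ axiom (S1) reduces to $\beta_{\rho(x)}\beta_x(s)=s$, which is $(4)$. Comparing $\rho_{\alpha,\beta}\big((x,s)*(y,t)\big)=\big(\rho(x*y),\beta_{x*y}(\alpha_{x,y}(s,t))\big)$ with $\rho_{\alpha,\beta}(x,s)*(y,t)=\big(\rho(x)*y,\alpha_{\rho(x),y}(\beta_x(s),t)\big)$, the first coordinates agree by (S2) in $X$ and the second coordinates match exactly when condition $(3)$ holds. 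For (S3), using (S3) in $X$ to get $x*\rho(y)=x*^{-1}y$ on first coordinates together with the recorded formula for $*^{-1}$ on $X\times S$, the identity $(x,s)*\rho_{\alpha,\beta}(y,t)=(x,s)*^{-1}(y,t)$ becomes, on second coordinates, $\alpha_{x,\rho(y)}(\beta_y(t))(s)=\alpha_{x*^{-1}y,y}(t)^{-1}(s)$, i.e.\ condition $(5)$. Finally, in the quandle case the sole additional axiom is idempotency, and $(x,s)*(x,s)=\big(x*x,\alpha_{x,x}(s,s)\big)=\big(x,\alpha_{x,x}(s,s)\big)$ equals $(x,s)$ if and only if $\alpha_{x,x}(s,s)=s$, which is $(6)$.

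I do not anticipate a genuine obstacle; the argument is a careful bookkeeping exercise. The one point requiring care is (S3): one must first invoke condition $(1)$ to guarantee that $*^{-1}$ exists on $X\times S$ and has the explicit coordinatewise description above, and then use axiom (S3) for $(X,\rho)$ to align the first coordinates, before the second-coordinate comparison yields condition $(5)$. It is also worth remarking explicitly that, because each axiom of $(X\times S,\rho_{\alpha,\beta})$ is quantified over all elements while its first-coordinate content is automatic from the axioms of $(X,\rho)$, the equivalences above are genuine biconditionals, so no separate argument for the ``only if'' direction is needed.
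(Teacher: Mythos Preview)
Your proposal is correct and is exactly the straightforward axiom-by-axiom verification the paper has in mind; indeed, the paper's own proof consists of the single sentence ``The proof is a straightforward calculation.'' You have simply spelled out the details the authors leave to the reader, and your handling of the one delicate point---using condition (1) to write down $*^{-1}$ explicitly before checking (S3)---is correct.
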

\begin{proof}
	The proof is a straightforward calculation.
\end{proof}

Note that, if $S$ contains a single set, then $X \times S$ is just the usual cartesian product.

 \begin{definition}
Let $(X, \rho)$ be a symmetric rack (respectively, quandle).  
\begin{enumerate}
\item The symmetric rack (respectively, quandle) obtained in Proposition \ref{Dynamical cocycles for symmetric racks} is called an {\it extension} of $X$ by $S$ through $(\alpha,\beta)$, and is denoted by $\big(X \times_{(\alpha,\beta)}S,\rho_{\alpha,\beta}\big).$
\item  The pair $(\alpha,\beta)$ satisfying the conditions in Proposition \ref{Dynamical cocycles for symmetric racks} is called a {\it dynamical cocycle} for the symmetric rack (respectively, quandle) $(X,\rho)$ over $S$. 
\end{enumerate}
 \end{definition}

 The natural projection $p:\big(X \times_{(\alpha,\beta)}S,\rho_{\alpha,\beta}\big) \rightarrow (X,\rho)$ given by $(x,s) \mapsto x$ is a morphism of symmetric racks (respectively, quandles). Next, we consider equivalence of these extensions.

\begin{definition}
Let $(X,\rho)$ be a symmetric rack (respectively, quandle), and $(\alpha,\beta)$ and $(\alpha',\beta')$ be dynamical cocycles of $(X,\rho)$ over $S$. We say that the extensions $\big(X \times_{(\alpha,\beta)}S,\rho_{\alpha,\beta}\big)$ and $\big(X \times_{(\alpha',\beta')}S,\rho_{\alpha',\beta'}\big)$ are {\it equivalent} if there is an isomorphism $f:\big(X \times_{(\alpha,\beta)}S,\rho_{\alpha,\beta}\big) \rightarrow \big(X \times_{(\alpha',\beta')}S,\rho_{\alpha',\beta'}\big)$ of symmetric racks (respectively, quandles) such that $p \, f =p.$
\end{definition}

Throughout, let $\Sigma_X$ denote the group of permutations of a set $X$.

\begin{definition}\label{dynamical cohomlogous}
Let $(\alpha,\beta)$ and $(\alpha',\beta')$ be  dynamical cocycles of a symmetric rack $(X,\rho)$ over $S$. We say that $(\alpha,\beta)$ and $(\alpha',\beta')$ are {\it cohomologous} if there exists a map $\gamma:X \rightarrow \cup_{x \in X}\Sigma_{S_x}$ with $\gamma(x) \in \Sigma_{S_x}$ such that 
	\begin{equation}\label{cohomologous dynamical cocycle}
		\alpha'_{x,y}(s,t):= \gamma_{x*y}\big(\alpha_{x,y}(\gamma_x^{-1}(s),\gamma_y^{-1}(t))\big)
		\text{ and }
		\beta'_x(s):=\gamma_{\rho(x)}\big(\beta_x(\gamma_x^{-1}(s))\big)
	\end{equation}
for all $x,y \in X$, $s\in S_x$ and $t \in S_y$. 
\end{definition}

\begin{proposition}\label{equivalence of dynamical extensions}
	Let $(\alpha,\beta)$ be a dynamical cocycle of a symmetric rack (respectively, quandle) $(X,\rho)$ over $S$ and $\gamma:X \rightarrow \cup_{x \in X}\Sigma_{S_x}$ be a map such that $\gamma(x) \in \Sigma_{S_x}$. Let
	\begin{equation*}
		\alpha'_{x,y}(s,t):= \gamma_{x*y}\big(\alpha_{x,y}(\gamma_x^{-1}(s),\gamma_y^{-1}(t))\big)
		\text{ and }
		\beta'_x(s):=\gamma_{\rho(x)}\big(\beta_x(\gamma_x^{-1}(s))\big)
	\end{equation*}
for all $x,y \in X$, $s\in S_x$ and $t \in S_y$. Then $(\alpha',\beta')$ is also a dynamical cocycle for $(X,\rho)$. Furthermore, 
$\big(X \times_{(\alpha,\beta)}S,\rho_{\alpha,\beta}\big)$ is isomorphic to $\big(X \times_{(\alpha',\beta')}S,\rho_{\alpha',\beta'}\big)$ as extensions of $(X,\rho)$ by $S$ if and only if $(\alpha',\beta')$ and $(\alpha, \beta)$ are cohomologous.
\end{proposition}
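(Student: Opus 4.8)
The plan is to prove Proposition~\ref{equivalence of dynamical extensions} in two stages: first that $(\alpha',\beta')$ is again a dynamical cocycle, and second the ``equivalent iff cohomologous'' statement. For the first stage I would verify conditions (1)--(5) (and (6) in the quandle case) of Proposition~\ref{Dynamical cocycles for symmetric racks} directly. Each $\gamma_x$ is a bijection of $S_x$, so condition (1) for $\alpha'$ follows because $\alpha'_{x,y}(\textunderscore,t)$ is a composite $\gamma_{x*y}\circ\alpha_{x,y}(\textunderscore,\gamma_y^{-1}(t))\circ\gamma_x^{-1}$ of bijections. For conditions (2)--(5) I would substitute the defining formulas \eqref{cohomologous dynamical cocycle}, observe that interior factors of the form $\gamma_w^{-1}\gamma_w$ cancel, and reduce to the corresponding identity for $(\alpha,\beta)$; condition (4) uses $\beta'_{\rho(x)}\beta'_x = \gamma_x\beta_{\rho(x)}\beta_x\gamma_x^{-1}$ together with (S1)-type bookkeeping $\rho(\rho(x))=x$, and condition (6) is immediate since $\gamma_{x}$ and $\gamma_x^{-1}$ cancel around $\alpha_{x,x}$.

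For the second stage I would construct the candidate isomorphism. Define $f:\big(X \times_{(\alpha,\beta)}S,\rho_{\alpha,\beta}\big) \rightarrow \big(X \times_{(\alpha',\beta')}S,\rho_{\alpha',\beta'}\big)$ by $f(x,s)=\big(x,\gamma_x(s)\big)$. This is a bijection with inverse $(x,s)\mapsto(x,\gamma_x^{-1}(s))$, and clearly $p\,f=p$. The point is that the rewriting \eqref{cohomologous dynamical cocycle} is exactly engineered so that $f$ respects $*$ and the good involution: one computes
$$
f\big((x,s)*(y,t)\big)=\big(x*y,\gamma_{x*y}(\alpha_{x,y}(s,t))\big)
=\big(x*y,\alpha'_{x,y}(\gamma_x(s),\gamma_y(t))\big)=f(x,s)*f(y,t),
$$
and similarly $f\big(\rho_{\alpha,\beta}(x,s)\big)=\big(\rho(x),\gamma_{\rho(x)}(\beta_x(s))\big)=\big(\rho(x),\beta'_x(\gamma_x(s))\big)=\rho_{\alpha',\beta'}\big(f(x,s)\big)$. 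Hence if $(\alpha',\beta')$ and $(\alpha,\beta)$ are cohomologous, the two extensions are equivalent. Conversely, suppose $g:\big(X \times_{(\alpha,\beta)}S,\rho_{\alpha,\beta}\big) \rightarrow \big(X \times_{(\alpha',\beta')}S,\rho_{\alpha',\beta'}\big)$ is an equivalence. Since $p\,g=p$, for each $x$ the map $g$ sends $\{x\}\times S_x$ into $\{x\}\times S_x$, so $g(x,s)=\big(x,\gamma_x(s)\big)$ for a well-defined family of maps $\gamma_x:S_x\to S_x$; bijectivity of $g$ forces each $\gamma_x\in\Sigma_{S_x}$. Feeding this form of $g$ into the condition that $g$ is a symmetric rack homomorphism recovers precisely the two equations in \eqref{cohomologous dynamical cocycle}, so $(\alpha',\beta')$ and $(\alpha,\beta)$ are cohomologous.

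I do not expect a genuine obstacle here; the proposition is essentially a bookkeeping verification, the symmetric analogue of \cite[Lemma 2.1]{MR1994219}. The only place requiring mild care is keeping track of which group $\gamma$ acts on at each stage --- the subscripts $x*y$, $\rho(x)$, etc., must match the codomains of $\alpha$ and $\beta$ --- and confirming that conditions (3) and (5), which mix $\alpha$, $\beta$ and the inversion $*^{-1}$, still cancel correctly after conjugating by $\gamma$; here condition (5) needs the observation that $\gamma$-conjugation commutes with taking the map-inverse $(\alpha_{x*^{-1}y,y}(t))^{-1}$, since conjugating a bijection by bijections commutes with inversion. Aside from that, every step is a direct substitution. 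For brevity in the written proof I would carry out conditions (1), (4), and the homomorphism computation for $f$ in detail and indicate that (2), (3), (5), (6) follow by the same cancellation of interior $\gamma_w^{-1}\gamma_w$ pairs.
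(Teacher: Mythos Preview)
Your proposal is correct and follows essentially the same route as the paper: verify conditions (1)--(6) for $(\alpha',\beta')$ by direct substitution and cancellation of interior $\gamma_w^{-1}\gamma_w$ pairs, then build the equivalence $f(x,s)=(x,\gamma_x(s))$ and, for the converse, read off $\gamma$ from any given equivalence using $p\,g=p$. The only difference is cosmetic---the paper writes out all six verifications rather than just (1) and (4)---and your remark about condition (5) needing care with $\gamma$-conjugation of the inverse map matches exactly what the paper checks.
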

\begin{proof}
	First, we verify that $(\alpha',\beta')$ is a dynamical cocycle.
	\begin{enumerate}
		\item Clearly, $\alpha'_{x,y}(\textunderscore,t):= \gamma_{x*y}(\alpha_{x,y}(\textunderscore,\gamma_y^{-1}(t)))$ is a bijection, since $\alpha_{x,y}(\textunderscore,t)$ and $\gamma_y$ are so.
\item For all $x,y,z \in X$ and $s\in S_x,t \in S_y, w\in S_z $, we have
\begin{eqnarray*}
\alpha'_{x*y,z} \big(\alpha'_{x,y}(s,t),w \big)&=&\alpha'_{x*y,z} \big(\gamma_{x*y}(\alpha_{x,y}(\gamma_x^{-1}(s),\gamma_y^{-1}(t))),w \big)\\
&=&\gamma_{x*y*z} \big(\alpha_{x*y,z} \big(\alpha_{x,y}(\gamma_x^{-1}(s),\gamma_y^{-1}(t)),\gamma_z^{-1}(w) \big) \big),
\end{eqnarray*}
\begin{eqnarray*}
\alpha'_{x*z,y*z} \big(\alpha'_{x,z}(s,w),\alpha'_{y,z}(t,w) \big)&=& 	\alpha'_{x*z,y*z}\big(\gamma_{x*z}(\alpha_{x,z}(\gamma_x^{-1}(s),\gamma_z^{-1}(w))),\gamma_{y*z}(\alpha_{y,z}(\gamma_y^{-1}(t),\gamma_z^{-1}(w))) \big)\\
		&=&\gamma_{x*y*z} \big(\alpha_{x*z,y*z} \big(\alpha_{x,z}(\gamma_x^{-1}(s),\gamma_z^{-1}(w)),\alpha_{y,z}(\gamma_y^{-1}(t),\gamma_z^{-1}(w)) \big)\big).
	\end{eqnarray*}
Since $(\alpha,\beta)$ is dynamical cocycle, by condition $(2)$ in Proposition \ref{Dynamical cocycles for symmetric racks}, we see that $\alpha'_{x*y,z}(\alpha'_{x,y}(s,t),w)=\alpha'_{x*z,y*z}(\alpha'_{x,z}(s,w),\alpha'_{y,z}(t,w))$, which is desired.
\item For all $x,y \in X$ and $s\in S_x,t \in S_y $, we have
\begin{eqnarray*}	
	\alpha'_{\rho(x),y} \big(\beta'_x(s),t \big)&=&\alpha'_{\rho(x),y} \big(\gamma_{\rho(x)}(\beta_x(\gamma_x^{-1}(s))),t \big)\\
	&=&\gamma_{\rho(x)*y} \big(\alpha_{\rho(x),y} \big(\beta_x(\gamma_x^{-1}(s)),\gamma_y^{-1}(t) \big) \big)
\end{eqnarray*}	
and
\begin{eqnarray*}	
	\beta'_{x*y} \big(\alpha'_{x,y}(s,t) \big)&=&\beta'_{x*y} \big(\gamma_{x*y}\big(\alpha_{x,y}(\gamma_x^{-1}(s),\gamma_y^{-1}(t)) \big) \big)\\
	&=&\gamma_{\rho(x*y)} \big(\beta_{x*y}\big(\alpha_{x,y}(\gamma_x^{-1}(s),\gamma_y^{-1}(t))\big)\big).
\end{eqnarray*}
Since $(\alpha,\beta)$ is dynamical cocycle, by condition $(3)$ in Proposition \ref{Dynamical cocycles for symmetric racks}, we get 
$\alpha'_{\rho(x),y}(\beta'_x(s),t)=\beta'_{x*y}(\alpha'_{x,y}(s,t))$.
\item For all $x \in X$ and $s \in S_x$, we have
\begin{eqnarray*}
	\beta'_{\rho(x)}\beta'_x(s)&=&\beta'_{\rho(x)} \big(\gamma_{\rho(x)}(\beta_x(\gamma_x^{-1}(s))) \big)\\
	&=&\gamma_{\rho^2(x)}\big(\beta_{\rho(x)}(\gamma_{\rho(x)}^{-1}(\gamma_{\rho(x)}(\beta_x(\gamma_x^{-1}(s)))))\big)\\
	&=&\gamma_{x} \big(\beta_{\rho(x)}(\beta_x(\gamma_x^{-1}(s)))\big).
\end{eqnarray*}
Since $(\alpha,\beta)$ is dynamical cocycle, by condition $(4)$ in Proposition \ref{Dynamical cocycles for symmetric racks}, we have $\beta'_{\rho(x)}\beta'_x(s)=\gamma_{x}(\gamma_x^{-1}(s))=s.$
\item Note that $\big(\alpha'_{x,y}(t) \big)^{-1}(s)=\gamma_x \big((\alpha_{x,y}(\gamma_y^{-1}(t)))^{-1}(\gamma_{x*y}^{-1}(s))\big)$. Thus, for  $x,y \in X$ and $s\in S_x,t \in S_y$, we have
\begin{eqnarray*}
	\alpha'_{x,\rho(y)}(\beta'_y(t))(s)&=&\alpha'_{x,\rho(y)} \big(\gamma_{\rho(y)}(\beta_y(\gamma_y^{-1}(t)))\big)(s)\\
	&=&\gamma_{x*\rho(y)} \big(\alpha_{x,\rho(y)} \big(\gamma_x^{-1}(s),\gamma_{\rho(y)}^{-1}(\gamma_{\rho(y)}(\beta_y(\gamma_y^{-1}(t)))) \big)\big)\\
	&=&\gamma_{x*\rho(y)} \big(\alpha_{x,\rho(y)} \big(\gamma_x^{-1}(s),(\beta_y(\gamma_y^{-1}(t))) \big)\big)\\
	&=&\gamma_{x*\rho(y)} \big(\alpha_{x,\rho(y)}(\beta_y(\gamma_y^{-1}(t)))(\gamma_x^{-1}(s))\big)\\
	&=&\gamma_{x*\rho(y)}\big((\alpha_{x*^{-1}y,y}(\gamma_y^{-1}(t)))^{-1}(\gamma_x^{-1}(s))\big), \quad \textrm{since}~\alpha~\textrm{satisfies $(5)$ in Proposition \ref{Dynamical cocycles for symmetric racks}}\\
	&=&(\alpha'_{x*^{-1}y,y}(t))^{-1}(s).
\end{eqnarray*}
\item If $(X,\rho)$ is a symmetric quandle, then $\alpha$ satisfies condition $(6)$ in Proposition \ref{Dynamical cocycles for symmetric racks}. Thus, $$\alpha'_{x,x}(s,s)=\gamma_{x*x}\big(\alpha_{x,x}(\gamma_x^{-1}(s),\gamma_x^{-1}(s))\big)=\gamma_{x}\big(\gamma_x^{-1}(s)\big)=s$$ for all $x \in X$ and $s \in S_x$. 
\end{enumerate}
\par

Suppose that $(\alpha',\beta')$ and $(\alpha, \beta)$ are cohomologous. Then, there exists $\gamma$ such that  \eqref{cohomologous dynamical cocycle} holds. Define
$T:(X \times_{(\alpha,\beta)}S,\rho_{\alpha,\beta}) \rightarrow (X \times_{(\alpha',\beta')}S,\rho_{\alpha',\beta'})$ by $T(x,s) = (x, \gamma_x(s)).$ Clearly, $T$ is a bijection. Further,
$$T\big((x,s)*(y,t)\big)=T\big((x*y,\alpha_{x,y}(s,t))\big)=\big(x*y,\gamma_{x*y}(\alpha_{x,y}(s,t))\big)$$ and 
$$T\big((x,s)\big)*T \big((y,t) \big)=\big(x,\gamma_x(s)\big)*\big(y,\gamma_y(t)\big)=\big(x*y,\alpha'_{x, y}(\gamma_x(s),\gamma_{y}(t))\big).$$
Thus, by the definition of $\alpha'$, we have $T((x,s)*(y,t))=T((x,s))*T((y,t)).$  Similarly, 
$$T\big(\rho_{\alpha,\beta}(x,s) \big)=T \big(\rho(x),\beta_x(s) \big)=\big(\rho(x),\gamma_{\rho(x)}(\beta_x(s))\big)$$ and
$$\rho_{\alpha',\beta'}\big(T(x,s)\big)=\rho_{\alpha',\beta'} \big(x,\gamma_{x}(s)\big)=\big(\rho(x),\beta_x'(\gamma_{x}(s))\big).$$
Thus, by the definition of $\beta'$, we obtain $T(\rho_{\alpha,\beta}(x,s))=\rho_{\alpha',\beta'}(T(x,s)).$ Since $p \, T=p$,  we obtain $(X \times_{(\alpha,\beta)}S,\rho_{\alpha,\beta})\cong (X \times_{(\alpha',\beta')}S,\rho_{\alpha',\beta'})$ as extensions of $(X,\rho)$ by $S$. Conversely, suppose that $R: \big(X \times_{(\alpha,\beta)}S,\rho_{\alpha,\beta}\big) \to \big(X \times_{(\alpha',\beta')}S,\rho_{\alpha',\beta'}\big)$ is an isomorphism of extensions. Then, there exists a map $\gamma:X \rightarrow \cup_{x \in X} \Sigma_{S_{x}}$ such that $R(x,s) = (x,\gamma_x(s))$.  The fact that $R$ is a symmetric rack homomorphism implies that  $(\alpha,\beta)$ and $(\alpha',\beta')$ are related as in \eqref{cohomologous dynamical cocycle}, and hence are cohomologous. 
\end{proof}
\medskip

\section{Dynamical cocycles and second cohomology}\label{sec dynamical cocycle and second cohomology}
	Let $(X,\rho)$ be a symmetric rack. Recall from Remark \ref{cocycleconditions} that a map $\sigma:X \times X \rightarrow \cup_{x \in X} A_{x}$ is a 
symmetric rack 2-cocycle if and only if $\sigma$ satisfies:
\begin{enumerate}\label{cocycle condition}
	\item [$\mathrm{(F1)}$]$\sigma_{x \ast y,z}+\phi_{x \ast y,z}(\sigma_{x,y})=\phi_{x \ast z, y \ast z}(\sigma_{x,z})+\sigma_{x \ast z, y \ast z}+\psi_{x \ast z, y \ast z}(\sigma_{y,z})$,
	\item[$\mathrm{(F2)}$] $\sigma_{\rho_X(x),y}=\eta_{x\ast y}(\sigma_{x,y})$,
	\item[$\mathrm{(F3)}$] $\phi_{x\ast \rho_X(y),y}(\sigma_{x,\rho_X(y)})=-\sigma_{x \ast \rho_X(y),y}$
\end{enumerate}	
for all $x,y,z \in X$, where $\sigma(x,y)=\sigma_{x,y}$ and $\sigma_{x,y} \in A_{x*y}$. Also if $(X,\rho)$ is a symmetric quandle, then $\sigma$ is  a symmetric quandle 2-cocycle if in addition, we have $\sigma_{x,x}=0$ for all $x \in X$. The following theorem extends \cite[Proposition 2.25]{MR1994219}.
 
\begin{theorem}\label{dynamical cocycles vs symmetric cocycles}
		 Let $(X,\rho)$ be a symmetric rack and $A=\{A_x \mid x \in X\}$ be a collection of abelian groups. Consider isomorphisms $\phi_{x,y}:A_x \rightarrow A_{x*y}$, homomorphisms $\psi_{x,y}: A_y \rightarrow A_{x*y}$ and $\eta_x: A_x \rightarrow A_{\rho(x)}$, and a map $\sigma:X \times X \rightarrow \cup_{x \in X} A_{x}$ such that $\sigma_{x,y} \in A_{x*y}$ for all $x, y \in X$. Define
$$ \alpha : X \times X \rightarrow \cup_{x,y \in X}\Map \big( A_x \times A_y, A_{x*y}\big)~~\textrm{by}~~\alpha_{x,y}(a,b) = \phi_{x,y}(a)+\psi_{x,y}(b)+\sigma_{x,y} $$
and 
$$ \beta:X \rightarrow \cup_{x \in X}\Map\big( A_x,  A_{\rho(x)}\big)~\textrm{by}~\beta_x(a) = \eta_x(a)$$
		for all $x,y \in X$, $a \in A_x$ and $b \in A_y$.  Then the following assertions hold:
		\begin{enumerate}[(A)]
		\item The pair $(\alpha,\beta)$ is a dynamical cocycle for the symmetric rack $(X,\rho)$ over $A$ if and only if $(A,\phi_{x,y},\psi_{x,y},\eta_x)$ is an $(X,\rho)$-module and $\sigma$ is a symmetric rack 2-cocycle.
		\item[] Furthermore, if $(X,\rho)$ is a symmetric quandle, then the pair $(\alpha,\beta)$ is a dynamical cocycle for  $(X,\rho)$ over $A$ if and only if $(A,\phi_{x,y},\psi_{x,y},\eta_x)$ is an $(X,\rho)$-module and $\sigma$ is a symmetric quandle 2-cocycle.
		\item Let $\sigma'$ be another symmetric rack (respectively, quandle) $2$-cocycle and $(\alpha',\beta')$ its corresponding dynamical cocycle as defined above. If $\sigma$ and $\sigma'$ are cohomologous, then $(\alpha,\beta)$ and $(\alpha',\beta')$ are cohomologous.
	\end{enumerate}	
\end{theorem}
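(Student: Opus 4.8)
The plan is to plug the affine formulas for $\alpha$ and $\beta$ directly into conditions (1)--(6) of Proposition \ref{Dynamical cocycles for symmetric racks} and to exploit linearity. First note that, as defined, $\alpha_{x,y}$ really does send $A_x\times A_y$ into $A_{x\ast y}$ and $\beta_x$ sends $A_x$ into $A_{\rho(x)}$, so the content lies entirely in conditions (1)--(6). Because each $A_x$ is an abelian group and $\phi_{x,y},\psi_{x,y},\eta_x$ are homomorphisms, after substitution each of those conditions becomes an equality of two maps that are affine in the group variables $s\in A_x$, $t\in A_y$, $w\in A_z$ --- each of the form ``a sum of homomorphisms applied to $s,t,w$, plus a constant''. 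Two such maps agree for all arguments precisely when their constant terms agree and, for each variable, the coefficient homomorphism of that variable agrees: one reads off the constant by evaluating at $s=t=w=0$ and then isolates each coefficient by setting all but one variable to $0$. This ``separation of variables'' is the whole engine of the proof, and it delivers both implications of part (A) at once, since each component equation extracted this way is itself a biconditional.

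Carrying this out for part (A): condition (1) says $a\mapsto\phi_{x,y}(a)+\big(\psi_{x,y}(t)+\sigma_{x,y}\big)$ is a bijection, which is exactly the standing hypothesis that $\phi_{x,y}$ is an isomorphism. Expanding condition (2), its $s$-, $t$- and $w$-coefficients will be precisely (M1), (M2) and (M7), and its constant term will be (F1). Condition (3) will yield (M4) and (M5) from its $s$- and $t$-coefficients and (F2) from its constant term, while condition (4), having no constant term, reduces to (M3). The delicate step is condition (5): I would first record that the inverse of the affine map $s'\mapsto\phi_{x\ast^{-1}y,y}(s')+\psi_{x\ast^{-1}y,y}(t)+\sigma_{x\ast^{-1}y,y}$ is $s\mapsto\phi_{x\ast^{-1}y,y}^{-1}(s)-\phi_{x\ast^{-1}y,y}^{-1}\big(\psi_{x\ast^{-1}y,y}(t)+\sigma_{x\ast^{-1}y,y}\big)$, and then compare it with $s\mapsto\phi_{x,\rho(y)}(s)+\psi_{x,\rho(y)}(\eta_y(t))+\sigma_{x,\rho(y)}$: the $s$-coefficient forces $\phi_{x\ast^{-1}y,y}\,\phi_{x,\rho(y)}=\id$, that is, (M6), and, after applying the isomorphism $\phi_{x\ast^{-1}y,y}$ to clear the inverse, the $t$-coefficient becomes (M8) (using $x\ast\rho(y)=x\ast^{-1}y$) and the constant term becomes (F3). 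For the quandle addendum, condition (6) expands to $(\phi_{x,x}+\psi_{x,x})(s)+\sigma_{x,x}=s$, whose $s$-coefficient is (M9) and whose constant term is $\sigma_{x,x}=0$. Collecting these equivalences gives part (A).

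For part (B), since $\sigma$ and $\sigma'$ are cohomologous there is a $1$-cochain $\tau\in C^1_{SR}((X,\rho);A)$ with $\sigma'-\sigma=\delta^1(\tau)$; unwinding the definitions, this is a family $\tau_x\in A_x$ satisfying the $1$-cochain relation $\eta_x(\tau_x)=\tau_{\rho(x)}$ (forced by the generators of $D_1^{SR}$) together with $\sigma'_{x,y}-\sigma_{x,y}=\phi_{x,y}(\tau_x)+\psi_{x,y}(\tau_y)-\tau_{x\ast y}$. I would then take $\gamma_x\colon A_x\to A_x$ to be the translation $\gamma_x(a)=a-\tau_x$, a permutation of the set $A_x$, and verify by a one-line substitution that $\gamma_{x\ast y}\big(\alpha_{x,y}(\gamma_x^{-1}(a),\gamma_y^{-1}(b))\big)$ collapses, exactly by the coboundary relation, to $\phi_{x,y}(a)+\psi_{x,y}(b)+\sigma'_{x,y}=\alpha'_{x,y}(a,b)$, and that $\gamma_{\rho(x)}\big(\beta_x(\gamma_x^{-1}(a))\big)$ collapses, exactly by the $1$-cochain relation, to $\eta_x(a)+\eta_x(\tau_x)-\tau_{\rho(x)}=\eta_x(a)=\beta'_x(a)$. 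Hence $(\alpha,\beta)$ and $(\alpha',\beta')$ are cohomologous in the sense of Definition \ref{dynamical cohomlogous}; in the quandle case nothing further is needed, since $\delta^1(\tau)(x,x)=\phi_{x,x}(\tau_x)+\psi_{x,x}(\tau_x)-\tau_x=0$ by (M9), so the normalization $\sigma_{x,x}=\sigma'_{x,x}=0$ stays consistent.

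The hard part will be condition (5): it is phrased through the inverse of a partial map rather than a plain composite, so one must handle the ``affine map with isomorphism linear part, then invert'' bookkeeping carefully and notice that it is precisely (M6) that lets one rewrite $\phi_{x\ast^{-1}y,y}^{-1}$ into the form it takes on the other side of the identity. Everything else is routine separation of variables.
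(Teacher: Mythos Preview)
Your proposal is correct and follows essentially the same approach as the paper: substitute the affine formulas into conditions (1)--(6), separate each resulting identity into its constant term and its coefficients in the free variables by setting all but one argument to zero, and read off (M1)--(M9) and (F1)--(F3); for part (B) you take the same translation $\gamma_x$ determined by the $1$-cochain $\tau$ and verify the cohomologous-cocycle identities directly. The only cosmetic difference is a sign convention on $\tau$ (equivalently on $\gamma_x$ versus $\gamma_x^{-1}$), which does not affect the argument.
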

\begin{proof}
\begin{enumerate}[(A)]
\item We proceed case by case as follows:
\begin{enumerate}[(1)]
\item It is easy to see that $\alpha_{x,y}(b)$ is a bijection, with inverse defined as $\alpha_{x,y}(b)^{-1}(a)=\phi_{x,y}^{-1}(a)-\phi_{x,y}^{-1}\psi_{x,y}(b)-\phi_{x,y}^{-1}\sigma_{x,y}$ for all $x, y \in X$. 
			\item  For all $x,y,z \in X$ and $a \in A_x,b \in A_y,c \in A_z,$ we have
			\begin{eqnarray*}
&& \alpha_{x*y,z}(\alpha_{x,y}(a,b),c)\\
				&=&	\alpha_{x*y,z}(\phi_{x,y}(a)+\psi_{x,y}(b)+\sigma_{x,y},c)\\
				&=&\phi_{x*y,z}(\phi_{x,y}(a)+\psi_{x,y}(b)+\sigma_{x,y})+\psi_{x*y,z}(c)+\sigma_{x*y,z}
			\end{eqnarray*}
and		
\begin{eqnarray*}
&& \alpha_{x*z,y*z}(\alpha_{x,z}(a,c),\alpha_{y,z}(b,c))\\
				&=&\alpha_{x*z,y*z}(\phi_{x,z}(a)+\psi_{x,z}(c)+\sigma_{x,z},\phi_{y,z}(b)+\psi_{y,z}(c)+\sigma_{y,z})\\
				&=&\phi_{x*z,y*z}(\phi_{x,z}(a)+\psi_{x,z}(c)+\sigma_{x,z})+\psi_{x*z,y*z}(\phi_{y,z}(b)+\psi_{y,z}(c)+\sigma_{y,z})+\sigma_{x*z,y*z}.
			\end{eqnarray*}
Let $0_x$ denote the identity of the group $A_x$ for each $x \in X$. If $\alpha$ is a dynamical cocycle, then $\alpha_{x*y,z}(\alpha_{x,y}(a,b),c)=\alpha_{x*z,y*z}(\alpha_{x,z}(a,c),\alpha_{y,z}(b,c))$. Putting $a=0_x$, $b=0_y$ and $c=0_z$ in above equations gives
$$\phi_{x*y,z}(\sigma_{x,y})+\sigma_{x*y,z}=\phi_{x*z,y*z}(\sigma_{x,z})+\psi_{x*z,y*z}(\sigma_{y,z})+\sigma_{x*z,y*z}.$$
Thus, $\alpha_{x*y,z}(\alpha_{x,y}(a,b),c)=	\alpha_{x*z,y*z}(\alpha_{x,z}(a,c),\alpha_{y,z}(b,c))$ implies that $$\phi_{x*y,z}(\phi_{x,y}(a)+\psi_{x,y}(b))+\psi_{x*y,z}(c)=\phi_{x*z,y*z}(\phi_{x,z}(a)+\psi_{x,z}(c))+\psi_{x*z,y*z}(\phi_{y,z}(b)+\psi_{y,z}(c)).$$
			Putting $b=0_y$ and $c=0_z$, we get \hyperref[M1]{(M1)}. Similarly, taking $a=0_x$ and $c=0_z$ gives \hyperref[M2]{(M2)}, whereas taking $a=0_x$ and $b=0_y$ gives \hyperref[M7]{(M7)}. Conversely, if $(A,\phi_{x,y},\psi_{x,y},\eta_x)$ is an $(X,\rho)$-module and $\sigma$ is a symmetric rack 2-cocycle, then using $(X,\rho)$-module axioms \hyperref[M1]{(M1)}, \hyperref[M2]{(M2)}, \hyperref[M7]{(M7)} and the cocycle condition \hyperref[cocycle condition]{(F1)}, we get
			$$\alpha_{x*y,z} \big(\alpha_{x,y}(a,b),c \big)=	\alpha_{x*z,y*z} \big(\alpha_{x,z}(a,c),\alpha_{y,z}(b,c) \big),$$
which is condition (2) for $(\alpha, \beta)$ to be a dynamical cocycle.
			\item  For all $x,y\in X$ and $a \in A_x,b \in A_y,$ we have 
			$$\alpha_{\rho(x),y} \big(\beta_x(a),b \big)=\phi_{\rho(x),y}(\eta_x(a))+\psi_{\rho(x),y}(b)+\sigma_{\rho(x),y}$$ and 
			$$\beta_{x*y} \big(\alpha_{x,y}(a,b)\big)=\eta_{x*y} \big(\phi_{x,y}(a)+\psi_{x,y}(b)+\sigma_{x,y} \big).$$
Thus, putting appropriate identities of abelian groups as in the argument of $(2)$ above, we see that $\alpha_{\rho(x),y}(\beta_x(a),b)=\beta_{x*y}(\alpha_{x,y}(a,b))$ if and only if $(X,\rho)$-module axioms \hyperref[M4]{(M4)}, \hyperref[M5]{(M5)} and cocycle condition \hyperref[cocycle condition]{(F2)} holds.

\item The condition (4) for $(\alpha, \beta)$ to be a dynamical cocycle is $\beta_{\rho(x)}\beta_x(a)=a$, which holds if and only if $\eta_{\rho(x)}\eta_x(a)=a$ and $(X,\rho)$-module axiom $\hyperref[M3]{(M3)}$ holds for all $x \in X$ and $a \in A_x.$
\item For all $x,y\in X$ and $a \in A_x,b \in A_y,$ we have 
$$\alpha_{x,\rho(y)} \big(\beta_y(b)\big)(a)=\phi_{x,\rho(y)}(a)+\psi_{x,\rho(y)}(\eta_y(b))+\sigma_{x,\rho(y)}$$
and
$$\big(\alpha_{x*^{-1}y,y}(b) \big)^{-1}(a)=\phi^{-1}_{x*^{-1}y,y}(a)-\phi^{-1}_{x*^{-1}y,y}\psi_{x*^{-1}y,y}(b)-\phi^{-1}_{x*^{-1}y,y}\sigma_{x*^{-1}y,y}.$$
The condition (5) for $(\alpha, \beta)$ to be a dynamical cocycle is  $\alpha_{x,\rho(y)}(\beta_y(b))(a)=\alpha_{x*^{-1}y,y}(b))^{-1}(a)$, which holds if and only if \hyperref[M6]{(M7)}, \hyperref[M6]{(M8)} and cocycle condition \hyperref[cocycle condition]{(F3)} holds.
\end{enumerate}
In addition if $(X,\rho)$ is a quandle, then $\alpha_{x,x}(s,s)=s$ if and only if \hyperref[M9]{(M9)} holds and $\sigma_{x,x}=0_x.$ This completes the proof of assertion (A).

\item[]
\item If $\sigma$ and $\sigma'$ are cohomologous, then there exists $\tau:X \rightarrow \cup_{x \in X}A_x$ such that $\tau_x \in A_x$, $\eta_x(\tau(x))=\tau(\rho(x))$ and $\sigma_{x,y}-\sigma'_{x,y}=\phi_{x,y} \tau(x)-\tau(x*y)+\psi_{x,y}(\tau(y))$. For each $x \in X, ~a \in A_x$, we define
$$ \gamma:X \rightarrow \cup_{x \in X}\Sigma_{A_x}~\textrm{by}~			\gamma_x(a)=\tau(x)+a.$$
Clearly, $\gamma_x \in \Sigma_{A_x}$ with its inverse defined as $\gamma_x^{-1}(a)=a-\tau(x)$ for all $a \in A_x.$
	Now, \begin{eqnarray*}
	\alpha'_{x,y}(a,b)&=& \phi_{x,y}(a)+\psi_{x,y}(b)+\sigma'_{x,y}\\
	&=&\gamma_{x*y}\big(\phi_{x,y}(a)+\psi_{x,y}(b)+\sigma'_{x,y}-\tau(x*y)\big), ~\textrm{by definition of}~\gamma\\
	&=&\gamma_{x*y}\big(\phi_{x,y}(a-\tau(x))+\psi_{x,y}(b-\tau(y))+\sigma'_{x,y}+\phi_{x,y} \tau(x)-\tau(x*y)+\psi_{x,y}(\tau(y))\big)\\
	&=&\gamma_{x*y}\big(\phi_{x,y}(\gamma_x^{-1}(a))+\psi_{x,y}(\gamma_y^{-1}(b))+\sigma_{x,y}\big)\\
	&=&\gamma_{x*y}\big( \alpha_{x,y}(\gamma_x^{-1}(a), \gamma_y^{-1}(b))\big)
\end{eqnarray*}
and
$$\beta'_x(a)=\eta_x(a)= \tau(\rho(x))+\eta_x(a)-\eta_x(\tau(x))= \gamma_{\rho(x)} \big(\eta_x(a-\tau(x)) \big)= \gamma_{\rho(x)} \big(\beta_x(\gamma_x^{-1}(a)) \big).
$$
Hence, $(\alpha,\beta)$ is cohomologous to $(\alpha',\beta')$, and the proof is complete.
\end{enumerate}
\end{proof}
\medskip

\section{From group extensions to dynamical extensions}\label{sec group extensions and dynamical extensions}
In this section, we explore connection between group extensions and dynamical extensions. The following result is inspired from \cite[Corollary 2.5]{MR1994219}.

\begin{proposition}\label{surjective homomorphism characterisation}
	Let $(X,\rho)$ and $(\tilde{X},\tilde{\rho})$ be symmetric racks (respectively, quandles). Let $f:(\tilde{X},\tilde{\rho}) \rightarrow (X,\rho)$ be a surjective symmetric rack (respectively, quandle) homomorphism and $T=\{f^{-1}(x) \mid x \in X\}$. Let $S$ be a collection of sets $\{ S_x \mid x \in X \}$ such that there is a bijection $\mu_x:f^{-1}(x)  \rightarrow S_x$ for each $x \in X$. Then $(\tilde{X},\tilde{\rho}) \cong (X \times_{(\alpha,\beta)}S, \rho_{\alpha,\beta})$ for some dynamical cocycle $(\alpha,\beta).$ 

\end{proposition}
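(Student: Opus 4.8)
The plan is to transport the symmetric rack (respectively, quandle) structure of $(\tilde X,\tilde\rho)$ onto $X\times S$ through a natural bijection built from $f$ and the maps $\mu_x$, and then to \emph{define} $(\alpha,\beta)$ as the pair recording the transported operation and involution. Once this is done, the fact that $(\alpha,\beta)$ is a dynamical cocycle is immediate from the ``if and only if'' in Proposition \ref{Dynamical cocycles for symmetric racks}, so no dynamical cocycle axiom needs to be checked by hand.

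First I would define $\Phi:\tilde X\to X\times S$ by $\Phi(\tilde x)=\big(f(\tilde x),\,\mu_{f(\tilde x)}(\tilde x)\big)$. Since $f$ is surjective, the fibres $\{f^{-1}(x)\}_{x\in X}$ partition $\tilde X$, and since each $\mu_x:f^{-1}(x)\to S_x$ is a bijection, $\Phi$ is a bijection with $\Phi^{-1}(x,s)=\mu_x^{-1}(s)$. Because $f$ is a symmetric rack (respectively, quandle) homomorphism we have $f^{-1}(x)*f^{-1}(y)\subseteq f^{-1}(x*y)$ and $\tilde\rho\big(f^{-1}(x)\big)\subseteq f^{-1}(\rho(x))$, so I can set
$$
\alpha_{x,y}(s,t):=\mu_{x*y}\big(\mu_x^{-1}(s)*\mu_y^{-1}(t)\big)\in S_{x*y},\qquad \beta_x(s):=\mu_{\rho(x)}\big(\tilde\rho(\mu_x^{-1}(s))\big)\in S_{\rho(x)},
$$
obtaining maps $\alpha(x,y)\in\Map(S_x\times S_y,S_{x*y})$ and $\beta(x)\in\Map(S_x,S_{\rho(x)})$ of exactly the type required in Proposition \ref{Dynamical cocycles for symmetric racks}. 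A short computation using $f(\tilde x*\tilde y)=f(\tilde x)*f(\tilde y)$ and $f(\tilde\rho(\tilde x))=\rho(f(\tilde x))$ then shows $\Phi(\tilde x*\tilde y)=\Phi(\tilde x)*\Phi(\tilde y)$ and $\Phi(\tilde\rho(\tilde x))=\rho_{\alpha,\beta}(\Phi(\tilde x))$, where on the right the operation $(x,s)*(y,t)=(x*y,\alpha_{x,y}(s,t))$ and the involution $\rho_{\alpha,\beta}(x,s)=(\rho(x),\beta_x(s))$ on $X\times S$ are those attached to $(\alpha,\beta)$; that is, these operations on $X\times S$ are precisely the images of $*$ and $\tilde\rho$ under the bijection $\Phi$.

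Now $(\tilde X,\tilde\rho)$ is a symmetric rack (respectively, quandle), and a structure transported along a bijection is again a symmetric rack (respectively, quandle); hence $X\times S$ with the operation and involution above is a symmetric rack (respectively, quandle), and $\Phi$ is an isomorphism onto it. By the ``if and only if'' of Proposition \ref{Dynamical cocycles for symmetric racks}, this forces $(\alpha,\beta)$ to satisfy conditions (1)--(5) (respectively, (1)--(6)), i.e.\ $(\alpha,\beta)$ is a dynamical cocycle for $(X,\rho)$ over $S$, and the symmetric rack (respectively, quandle) just constructed is by definition $\big(X\times_{(\alpha,\beta)}S,\rho_{\alpha,\beta}\big)$; since $p\,\Phi=f$, the isomorphism is even compatible with the projections. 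The only delicate point is the well-definedness of $\alpha$ and $\beta$ — namely that $\mu_x^{-1}(s)*\mu_y^{-1}(t)$ lies in $f^{-1}(x*y)$ and $\tilde\rho(\mu_x^{-1}(s))$ in $f^{-1}(\rho(x))$ — which is exactly where the hypothesis that $f$ is a homomorphism of symmetric racks is used. After that, everything else follows formally from Proposition \ref{Dynamical cocycles for symmetric racks} and no axiom verification is needed.
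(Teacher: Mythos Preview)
Your proof is correct and follows essentially the same approach as the paper: both define the same $\alpha$ and $\beta$ via the bijections $\mu_x$, transport the symmetric rack structure through $\Phi$, and infer that $(\alpha,\beta)$ is a dynamical cocycle from the fact that the transported structure is a symmetric rack. Your write-up is slightly more explicit in invoking the ``if and only if'' of Proposition~\ref{Dynamical cocycles for symmetric racks} and in noting the compatibility $p\,\Phi=f$, but the argument is the same.
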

\begin{proof} Let $X \times S:=\{(x,s)\mid x \in X~\textrm{and}~s \in S_x\}.$ Clearly, the map $\Phi: X \times S \to \tilde{X}$ given by $\Phi(x, s)=\mu_x^{-1}(s)$, is a bijection.  Define the maps
$$ \alpha:X \times X \rightarrow \cup_{x,y \in X}\Map(S_x \times S_y, S_{x*y})~\textrm{by}~ \alpha_{x,y}(s,t) = \mu_{x*y} \big(\mu_x^{-1}(s) \tilde{*}	\mu_y^{-1}(t) \big)$$
and 
$$ \beta:X \rightarrow \Map\big(S_x,S_{\rho(x)}\big)~\textrm{by}~ \beta_x(s)= \mu_{\rho(x)} \big(\tilde{\rho}(\mu_x^{-1}(s))\big),$$
where $\tilde{*}$ is the rack operation on $\tilde{X}.$ Note that $	\alpha_{x,y}(s,t)\in S_{x*y}$ and $\beta_x(s) \in S_{\rho(x)}.$ Since $(\tilde{X},\tilde{\rho})$ is a symmetric rack, it follows that $(\alpha,\beta)$ is a dynamical cocycle of $(X,\rho)$ over $S$, and the bijection $\Phi$ becomes an isomorphism of symmetric racks. 
\end{proof}

\begin{corollary}\label{conj core extension}
Let $1 \rightarrow A \rightarrow E \rightarrow G \rightarrow 1$ be an extension of groups and $SQ(E)=(\Conj_n(E),\mathrm{inv})$ or $(\Core(E), \id)$. Then $SQ(E) \cong SQ(G) \times_{(\alpha,\beta)} SQ(A)$ for some dynamical cocycle $(\alpha, \beta).$	
\end{corollary}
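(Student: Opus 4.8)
The plan is to apply Proposition \ref{surjective homomorphism characterisation} to a suitable surjective symmetric quandle homomorphism induced by the group projection $\pi : E \to G$. First I would observe that $\pi$ is a group homomorphism, hence it induces a quandle homomorphism $SQ(E) \to SQ(G)$ in both cases: for conjugation quandles, $\pi(x y x^{-1}) = \pi(x)\pi(y)\pi(x)^{-1}$, so $\pi$ respects the $n$-conjugation operation $x \ast y = y^{-n} x y^{n}$ (or whichever convention the paper fixes), and it respects the good involution since $\pi(x^{-1}) = \pi(x)^{-1}$; for core quandles, $\pi(x y^{-1} x) = \pi(x)\pi(y)^{-1}\pi(x)$, so $\pi$ respects $x \ast y = x y^{-1} x$, and it trivially commutes with the identity good involution. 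In either case $\pi$ is surjective on underlying sets because $E \to G$ is surjective, so $\pi : SQ(E) \to SQ(G)$ is a surjective symmetric quandle homomorphism.

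Next I would identify the fibers. Since $1 \to A \to E \to G \to 1$ is exact, $\pi^{-1}(1_G) = A$ as a subset of $E$, and every fiber $\pi^{-1}(g)$ is a coset of $A$ in $E$, hence is in bijection with $A$ — choosing a set-theoretic section $t : G \to E$ with $t(1_G) = 1_E$, the map $a \mapsto t(g) a$ (or $a \mapsto a\, t(g)$) gives a bijection $A \to \pi^{-1}(g)$. Thus, taking $S_g := SQ(A)$ as the underlying set (namely $A$) for every $g \in G$, we obtain bijections $\mu_g : \pi^{-1}(g) \to S_g$. Proposition \ref{surjective homomorphism characterisation} then immediately yields $SQ(E) \cong SQ(G) \times_{(\alpha,\beta)} SQ(A)$ as symmetric quandles, for the dynamical cocycle $(\alpha,\beta)$ built from $\tilde{\ast}$ (the quandle operation of $SQ(E)$) and $\tilde{\rho}$ via the $\mu_g$'s, exactly as in the proof of that proposition.

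The only genuine content beyond invoking the proposition is checking that the target of the claimed isomorphism is literally the dynamical extension of $SQ(G)$ \emph{by $SQ(A)$}, i.e.\ that the collection $S = \{S_g\}$ can be taken with all $S_g$ equal to the fixed set $A$ underlying $SQ(A)$ — which is true since all fibers of $\pi$ are cosets of $A$ and hence equinumerous with $A$ — and perhaps remarking (though the statement does not require it) that the resulting $\alpha_{1_G, 1_G}$ and $\beta_{1_G}$ recover the quandle operation and good involution of $SQ(A)$ on $S_{1_G}$, since $\pi^{-1}(1_G) = A$ is a \emph{sub}quandle of $SQ(E)$ isomorphic to $SQ(A)$. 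I expect the main (mild) obstacle to be bookkeeping: confirming that the chosen $n$-conjugation and core operations are genuinely preserved by $\pi$ under whatever sign/convention the paper uses, and that $SQ(A)$ does sit inside $SQ(E)$ as the fiber over the identity with the correct induced structure; none of this is deep, and the substantive work has already been done in Proposition \ref{surjective homomorphism characterisation}.
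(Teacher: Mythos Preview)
Your proposal is correct and follows essentially the same route as the paper: apply Proposition~\ref{surjective homomorphism characterisation} to the surjective symmetric quandle homomorphism $SQ(\pi)$, using a set-theoretic section of $\pi$ to identify every fiber with $A$. The paper's proof is terser (it just says ``by functoriality'' where you spell out why $\pi$ preserves the quandle operation and good involution) and records an explicit formula for the dynamical cocycle, but the substance is the same.
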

\begin{proof}
	Let $\pi:E \rightarrow G$ be the quotient homomorphism and $\kappa:G \rightarrow E$ a set-theoretic section such that $\kappa(1)=1$. By functoriality, we have a surjective symmetric quandle homomorphism $SQ(\pi):SQ(E) \rightarrow SQ(G)$ such that 
	$$SQ(\pi)\big(\kappa(x)*\kappa(y)\big)=x*y=SQ(\pi)\big(\kappa(x*y)\big)$$
	for all $x,y \in G$. Thus, there exists a unique element $\theta(x,y) \in A$ such that 
	$$\kappa(x)*\kappa(y)=\kappa(x*y)\theta(x,y).$$
Note that, every element of $E$ can be written uniquely in the form $\kappa(x)s$ for some $x \in G$ and $s \in A$. For each $x \in G,$ there is a bijection $g_x:SQ(\pi)^{-1}(x) \rightarrow A$ given by $g_x(\kappa(x)s)=s.$ In the notation of Proposition \ref{surjective homomorphism characterisation}, we identify each $S_x$ with $A$. Hence, by Proposition \ref{surjective homomorphism characterisation}, we obtain $SQ(E) \cong SQ(G) \times_{(\alpha,\beta)} A$, where $(\alpha,\beta)$ is the dynamical cocycle given by 
$$\alpha_{x,y}(s,t)= g_{x*y} \big(g_x^{-1}(s)\tilde{*}g_y^{-1}(t)\big)~\textrm{and}~ \beta_x(s)=g_{\rho(x)} \big(\tilde{\rho}(g_x^{-1}(s))\big)$$ for $x,y\in G$ and $s,t \in A$.
\end{proof}

\begin{corollary}\label{core extension}
	Let $1 \rightarrow A \rightarrow E \xrightarrow{\pi} G \rightarrow 1$ be an extension of groups, $z \in A$ be a central element of $E$ and $\bar{z}:=\pi(z)$. Then $(\Core(E), z) \cong (\Core(G), \bar{z}) \times_{(\alpha,\beta)} (\Core(A), z)$ for some dynamical cocycle $(\alpha, \beta).$	
\end{corollary}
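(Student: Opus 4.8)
The plan is to follow the proof of Corollary \ref{conj core extension} almost verbatim, the only genuinely new input being the bookkeeping for the good involution $y\mapsto yz$. First I would record that $(\Core(G),\bar z)$ and $(\Core(A),z)$ are honest symmetric quandles: since $z$ lies in the centre of $E$ and $z^2=1$, both $z\in A$ and $\bar z=\pi(z)\in G$ are central elements of order dividing $2$, so $y\mapsto yz$ on $\Core(A)$ and $y\mapsto y\bar z$ on $\Core(G)$ are good involutions (if $\bar z=1$ one reads $(\Core(G),\bar z)$ as $(\Core(G),\id)$). Next, functoriality of $\Core$ turns $\pi$ into a quandle homomorphism $\Core(E)\to\Core(G)$, since $\pi(x*y)=\pi(yx^{-1}y)=\pi(y)\pi(x)^{-1}\pi(y)=\pi(x)*\pi(y)$, and $\pi(yz)=\pi(y)\bar z$ shows it intertwines the two involutions; as $\pi$ is onto, $\Core(\pi)\colon(\Core(E),z)\to(\Core(G),\bar z)$ is a surjective symmetric quandle homomorphism.

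Then, exactly as in Corollary \ref{conj core extension}, I fix a set-theoretic section $\kappa\colon G\to E$ of $\pi$ with $\kappa(1)=1$, so that every element of $E$ is uniquely of the form $\kappa(x)s$ with $x\in G$ and $s\in A$; this yields, for each $x\in G$, a bijection $g_x\colon\Core(\pi)^{-1}(x)=\pi^{-1}(x)\to A$, $g_x(\kappa(x)s)=s$, onto the underlying set of $\Core(A)$. Now Proposition \ref{surjective homomorphism characterisation} applies directly to $\Core(\pi)$ with $S_x=A$ and $\mu_x=g_x$, producing a dynamical cocycle $(\alpha,\beta)$ of $(\Core(G),\bar z)$ over the constant collection $S=\{S_x=A\mid x\in G\}$, explicitly
$$\alpha_{x,y}(s,t)=g_{x*y}\big(g_x^{-1}(s)\,\tilde{*}\,g_y^{-1}(t)\big)=g_{x*y}\big((\kappa(y)t)(\kappa(x)s)^{-1}(\kappa(y)t)\big),\qquad \beta_x(s)=g_{x\bar z}\big(\kappa(x)sz\big),$$
together with an isomorphism of symmetric quandles $(\Core(E),z)\cong(\Core(G),\bar z)\times_{(\alpha,\beta)}S$. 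Identifying each $S_x$ with the underlying set of $(\Core(A),z)$ via $g_x$ rewrites this as $(\Core(E),z)\cong(\Core(G),\bar z)\times_{(\alpha,\beta)}(\Core(A),z)$, which is the assertion.

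I do not expect a real obstacle: all the substance is already packaged in Proposition \ref{surjective homomorphism characterisation}. The only point requiring (minor) care is the compatibility of the good involutions under $\pi$ — that the correct involution datum on the base is $\bar z=\pi(z)$ and that the two factor symmetric quandles are well-defined — and this is immediate from $z$ being central of order dividing two. If one additionally wished $(\alpha,\beta)$ to visibly restrict to the core operation and to $s\mapsto sz$ on the fibre, one would pick $\kappa$ compatible with right multiplication by $z$ (so that $\kappa(x\bar z)=\kappa(x)z$ whenever $\bar z\neq 1$); but since the statement only claims existence of some dynamical cocycle, this refinement is not needed.
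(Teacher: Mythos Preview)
Your proposal is correct and follows essentially the same route as the paper: verify that $\Core(\pi)\colon(\Core(E),z)\to(\Core(G),\bar z)$ is a surjective symmetric quandle homomorphism, use a set-theoretic section $\kappa$ to identify each fibre with $A$, and then invoke Proposition~\ref{surjective homomorphism characterisation} exactly as in Corollary~\ref{conj core extension}. You supply more detail than the paper (which simply says ``employing the arguments similar to Corollary~\ref{conj core extension}''), including the observation that $\bar z=\pi(z)=1$ since $z\in A=\ker\pi$, but the argument is the same.
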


\begin{proof}
As above, let $\pi:E \rightarrow G$ be the quotient map and $\kappa:G \rightarrow E$ a set-theoretic section such that $\kappa(1)=1$. We have a surjective symmetric quandle homomorphism $\Core(\pi):(\Core(E), z) \rightarrow (\Core(G), \bar{z}))$. Employing the arguments similar to corollary \ref{conj core extension}, we obtain $(\Core(E), z) \cong (\Core(G), \bar{z}) \times_{(\alpha,\beta)} (\Core(A), z)$.
\end{proof}
\medskip

\section{Wells type exact sequence for symmetric racks}\label{sectionwellstype}
Let $(X,\rho_{X})$ be a symmetric rack and $A$ be a left $\mathbb{Z}(X, \rho_{X})$-module. Let $\operatorname{Aut}(X, \rho_{X})$ denote the group of symmetric rack automorphisms of $(X, \rho_{X})$, and $\operatorname{Aut}(A)$ denote the group of $\mathbb{Z}(X, \rho_{X})$-module automorphisms of $A$. For investigations in this section, we will restrict ourselves to the case where $\phi_{x, y}=\phi$, $\psi_{x, y}=\psi$ and $\eta_{x}=\eta$, i.e. when $\phi, \psi, \eta$ do not depend on $x, y \in X$. Note that, even with this restriction, the coefficient system is quite general (see \cite[Example 3.3]{KSS2024}).
\par

For $(\zeta, \theta) \in \operatorname{Aut}(X, \rho_{X}) \times \operatorname{Aut}(A)$ and a 2-cocycle $\alpha \in Z_{S R}^{2}((X, \rho_{X}); A)$, we define
$${}^{(\zeta,\theta)}\alpha_{x,y}\coloneqq\theta \big(\alpha_{\zeta^{-1}(x), \zeta^{-1}(y)} \big).$$

\begin{lemma}
	The map $\sigma: X \times X \longrightarrow A$ given by $\sigma(x, y)=\theta \big(\alpha_{\zeta^{-1}(x), \zeta^{-1}(y)} \big)$ is a symmetric rack 2-cocycle.
\end{lemma}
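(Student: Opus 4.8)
The plan is to verify directly that $\sigma(x,y) = \theta\big(\alpha_{\zeta^{-1}(x),\zeta^{-1}(y)}\big)$ satisfies the three defining conditions (F1), (F2), (F3) of a symmetric rack 2-cocycle listed in the excerpt (the reformulation from Remark \ref{cocycleconditions}). The strategy is to substitute into each cocycle identity, change variables by replacing $x,y,z$ with $\zeta^{-1}(x),\zeta^{-1}(y),\zeta^{-1}(z)$ so that the arguments of $\alpha$ become what the cocycle condition for $\alpha$ can reach, use that $\zeta$ is a symmetric rack automorphism to rewrite expressions like $\zeta^{-1}(x)*\zeta^{-1}(y)$ as $\zeta^{-1}(x*y)$ and $\zeta^{-1}(\rho_X(y))$ as $\rho_X(\zeta^{-1}(y))$, then apply the fact that $\alpha$ is a 2-cocycle, and finally apply $\theta$ using that it is a $\mathbb{Z}(X,\rho_X)$-module automorphism (so it commutes with $\phi,\psi,\eta$, which here are constant in $x,y$).

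Here is the order of steps. First I would check (F2): $\sigma_{\rho_X(x),y} = \theta(\alpha_{\zeta^{-1}(\rho_X(x)),\zeta^{-1}(y)}) = \theta(\alpha_{\rho_X(\zeta^{-1}(x)),\zeta^{-1}(y)})$ since $\zeta$ commutes with $\rho_X$; then apply (F2) for $\alpha$ to get $\theta(\eta(\alpha_{\zeta^{-1}(x),\zeta^{-1}(y)}))$; then pull $\eta$ out past $\theta$ (since $\theta$ is $\mathbb{Z}(X,\rho_X)$-linear) to obtain $\eta(\sigma_{x,y})$, recalling $\eta_{x*y}=\eta$ is constant. Next (F3) is entirely analogous, using $\zeta^{-1}(\rho_X(y)) = \rho_X(\zeta^{-1}(y))$, $\zeta^{-1}(x*\rho_X(y)) = \zeta^{-1}(x)*\rho_X(\zeta^{-1}(y))$, the identity (F3) for $\alpha$, and $\mathbb{Z}(X,\rho_X)$-linearity of $\theta$ to move $\phi$ across. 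Finally (F1) is the longest but structurally the same: replace $x,y,z$ by their $\zeta^{-1}$-images throughout, use that $\zeta^{-1}$ is a rack homomorphism to match all the iterated products $[\,\cdot\,]$, invoke (F1) for $\alpha$, and then distribute $\theta$ over the sum and commute it past the constant $\phi,\psi$.

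The main obstacle is purely bookkeeping: making sure that at every occurrence the automorphism $\zeta$ (equivalently $\zeta^{-1}$) is genuinely pushed through $*$ and through $\rho_X$ in the correct place, so that after the substitution the arguments of $\alpha$ line up exactly with an instance of the cocycle identity for $\alpha$ — there is no conceptual difficulty, since $\zeta\in\operatorname{Aut}(X,\rho_X)$ gives exactly the compatibility with both operations, and $\theta\in\operatorname{Aut}(A)$ gives exactly the compatibility with the structure maps $\phi,\psi,\eta$ (which, under the running assumption that these do not depend on the points, behave as honest scalars). One small point worth stating explicitly is that the domain $C_2^{SR}(X,\rho_X)$ is a quotient, so one should note that $\sigma$ respects the relations defining $D_2^{SR}$; but this is immediate because those relations are built from $*$, $\rho_X$, $\phi$, $\psi$, $\eta$, all of which are preserved by $\zeta$ and $\theta$, so $\sigma$ descends. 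Hence $\sigma\in Z_{SR}^2((X,\rho_X);A)$.
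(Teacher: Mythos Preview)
Your proposal is correct and follows essentially the same route as the paper: both verify the three 2-cocycle conditions of Remark~\ref{cocycleconditions} directly, using that $\zeta^{-1}$ commutes with $*$ and $\rho_X$, that $\alpha$ is a 2-cocycle, and that $\theta$ is $\mathbb{Z}(X,\rho_X)$-linear (together with the standing assumption that $\phi,\psi,\eta$ are constant). Your extra remark about $\sigma$ descending to the quotient $C_2^{SR}$ is harmless but redundant, since conditions (F2) and (F3) are precisely the vanishing on the generators $U_2$ and $V_2$ of $D_2^{SR}$.
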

\begin{proof}
We verify the symmetric rack 2-cocycle conditions as given in Remark \ref{cocycleconditions}. For any $x, y, z \in X$, we have
\begin{small}
\begin{eqnarray*}
			&& -\phi_{x * z, y * z} \sigma(x, z)+\phi_{x * y, z} \sigma(x, y)+\sigma(x * y, z) -\sigma(x * z, y * z)-\psi_{x * z, y * z} \sigma(y, z) \\
			& = &-\phi_{x * z, y * z} \theta(\alpha_{\zeta^{-1}(x), \zeta^{-1}(z)})+\phi_{x * y, z} \theta(\alpha_{\zeta^{-1}(x), \zeta^{-1}(y)}) +\theta(\alpha_{\zeta^{-1}(x * y), \zeta^{-1}(z)}) \\
			&& -\theta(\alpha_{\zeta^{-1}(x * z), \zeta^{-1}(y * z)})  -\psi_{x * z, y * z} \theta(\alpha_{\zeta^{-1}(y), \zeta^{-1}(z)})\\
			& = &\theta \Big(-\phi_{x * z, y * z} \alpha_{\zeta^{-1}(x), \zeta^{-1}(z)} +\phi_{x * y, z} \alpha_{\zeta^{-1}(x), \zeta^{-1}(y)} +\alpha_{\zeta^{-1}(x * y), \zeta^{-1}(z)}\\
			&& -\alpha_{\zeta^{-1}(x * z), \zeta^{-1}(y * z)}-\psi_{x * z, y * z} \alpha_{\zeta^{-1}(y), \zeta^{-1}(z)} \Big),\\
			&& ~\textrm{since $\theta$ is a $\mathbb{Z}(X, \rho_{X})$-module homomorphism}\\
			& = &\theta \Big(-\phi_{\zeta^{-1}(x) * \zeta^{-1}(z), \zeta^{-1}(y) * \zeta^{-1}(z)}  \alpha_{\zeta^{-1}(x) , \zeta^{-1}(z)}  +\phi_{\zeta^{-1}(x) * \zeta^{-1}(y), \zeta^{-1}(z)} \alpha_{\zeta^{-1}(x), \zeta^{-1}(y)}  \\
			&& +\alpha_{\zeta^{-1}(x) * \zeta^{-1}(y), \zeta^{-1}(z)} -\alpha_{\zeta^{-1}(x) * \zeta^{-1}(z), \zeta^{-1}(y) * \zeta^{-1}(z)}  -\psi_{\zeta^{-1}(x) * \zeta^{-1}(z), \zeta^{-1}(y) * \zeta^{-1}(z)} \alpha _{\zeta^{-1}(y), \zeta^{-1}(z)} \Big),\\
& & \text{since $\phi_{x, y}=\phi$ and $\psi_{x, y}=\psi$ for all $x, y \in X$}\\
& = &\theta(0), \quad \text{since $\alpha$ is a symmetric rack 2-cocycle}\\
&=& 0.
\end{eqnarray*}	
\end{small}
By similar arguments as above, we can show that
$$ \eta_{x *y} \sigma(x, y)-\sigma(\rho_{X}(x), y)=0$$
and 
$$\phi_{x, y} \sigma(x * y, \rho_{X}(y))+\sigma(x, y)=0$$
 for any $x, y \in X$. This proves that $\sigma$ is a symmetric rack 2-cocycle.
\end{proof}

\begin{lemma}
	The map
$$
\big(\operatorname{Aut}(X, \rho_{X}) \times \operatorname{Aut}(A) \big) \times Z_{SR}^{2} \big((X, \rho_{X}) ; A \big) \longrightarrow Z_{SR}^{2} \big((X, \rho_{X}) ; A \big)
$$
given by $\big((\zeta, \theta),  \alpha\big) \mapsto ~^{(\zeta,\theta)}\alpha$, where $^{(\zeta,\theta)}\alpha_{x, y}=\theta(\alpha_{\zeta^{-1}(x), \zeta^{-1}(y)})$, defines a left action of the group $\operatorname{Aut}(X, \rho_{X}) \times \operatorname{Aut}(A)$ on the group $Z_{SR}^{2}((X, \rho_{X}) ; A)$ by automorphisms.
\end{lemma}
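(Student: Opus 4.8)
The plan is to verify, in the order listed, the three ingredients packaged into the phrase ``action of a group by automorphisms of an abelian group'': first that the formula ${}^{(\zeta,\theta)}\alpha$ really lands in $Z_{SR}^{2}((X,\rho_{X});A)$, second that the assignment respects the identity and composition of $\operatorname{Aut}(X,\rho_{X})\times\operatorname{Aut}(A)$, and third that each individual map ${}^{(\zeta,\theta)}(-)$ is a group automorphism of the abelian group $Z_{SR}^{2}((X,\rho_{X});A)$.

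For well-definedness I would simply invoke the preceding lemma: it already shows that $\sigma(x,y)=\theta(\alpha_{\zeta^{-1}(x),\zeta^{-1}(y)})$ satisfies the three symmetric rack $2$-cocycle conditions of Remark \ref{cocycleconditions}, so ${}^{(\zeta,\theta)}\alpha\in Z_{SR}^{2}((X,\rho_{X});A)$ and the map in the statement is well-defined. It is worth recalling that this step, and hence the whole lemma, rests on the standing assumption that $\phi,\psi,\eta$ do not depend on the base points.

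Next I would check the action axioms. The identity is immediate, ${}^{(\id,\id)}\alpha_{x,y}=\alpha_{x,y}$, so ${}^{(\id,\id)}\alpha=\alpha$. For compatibility with composition I would compute directly, for $(\zeta_1,\theta_1),(\zeta_2,\theta_2)\in\operatorname{Aut}(X,\rho_{X})\times\operatorname{Aut}(A)$ and all $x,y\in X$,
\[
{}^{(\zeta_1,\theta_1)}\big({}^{(\zeta_2,\theta_2)}\alpha\big)_{x,y}
=\theta_1\theta_2\big(\alpha_{\zeta_2^{-1}\zeta_1^{-1}(x),\,\zeta_2^{-1}\zeta_1^{-1}(y)}\big)
={}^{(\zeta_1\zeta_2,\,\theta_1\theta_2)}\alpha_{x,y},
\]
where one uses $(\zeta_1\zeta_2)^{-1}=\zeta_2^{-1}\zeta_1^{-1}$; since the group operation on $\operatorname{Aut}(X,\rho_{X})\times\operatorname{Aut}(A)$ is componentwise composition, this is precisely the left-action identity.

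Finally, for the ``by automorphisms'' clause: additivity of ${}^{(\zeta,\theta)}(-)$ follows because $\theta$ is a $\mathbb{Z}(X,\rho_{X})$-module homomorphism and precomposing the index by $\zeta^{-1}$ is additive, giving ${}^{(\zeta,\theta)}(\alpha+\alpha')={}^{(\zeta,\theta)}\alpha+{}^{(\zeta,\theta)}\alpha'$; and bijectivity then comes for free from the action axioms, since ${}^{(\zeta^{-1},\theta^{-1})}(-)$ is a two-sided inverse of ${}^{(\zeta,\theta)}(-)$. I do not expect a genuine obstacle in this lemma: the only points needing care are keeping the order of composition straight so that one obtains a left (rather than a right) action, and remembering that the ``image is again a $2$-cocycle'' step silently uses the constancy of $\phi,\psi,\eta$ through the previous lemma.
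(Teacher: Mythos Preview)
Your proposal is correct and follows essentially the same route as the paper: the paper likewise relies on the preceding lemma for well-definedness, verifies the composition law by the same direct computation, and then checks additivity of each $\Omega(\zeta,\theta)$ to conclude that the action is by group automorphisms. Your only cosmetic differences are that you state the identity axiom explicitly and deduce bijectivity from the action axioms, whereas the paper packages both by saying one obtains a homomorphism into the group of bijections.
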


\begin{proof}
It is easy to see that
\begin{eqnarray*}
		& & \big(\big((\zeta_{1}, \theta_{1}) \cdot(\zeta_{2}, \theta_{2})\big) \cdot \alpha \big)_{x, y} \\
		& =& \big((\zeta_{1} ~ \zeta_{2}, \theta_{1} ~ \theta_{2}) \cdot \alpha \big)_{x, y} \\
		&=& (\theta_{1} ~ \theta_{2})\big(\alpha_{\zeta_{2}^{-1} ~ \zeta_{1}^{-1}(x), \zeta_{2}^{-1} ~ \zeta_{1}^{-1}(y)}\big)\\
		& =&\theta_{1} \big(((\zeta_{2}, \theta_{2}) \cdot \alpha)_{\zeta_{1}^{-1}(x), \zeta_{1}^{-1}(y)} \big) \\
		& =&\theta_{1}\big(\beta_{\zeta_{1}^{-1}(x), \zeta_{1}^{-1}(y)}\big),\quad \text{where we denote $(\zeta_{2}, \theta_{2}) \cdot \alpha$ by $\beta$}\\
		&=& \big((\zeta_{1}, \theta_{1}) \cdot\beta \big)_{x, y}\\
		&=& \big((\zeta_{1}, \theta_{1}) \cdot \big((\zeta_{2}, \theta_{2}) \cdot \alpha \big) \big)_{x, y} 
	\end{eqnarray*}
for all $x, y \in X$. This gives a group homomorphism $\Omega : \operatorname{Aut}(X, \rho_{X}) \times \operatorname{Aut} (A)\longrightarrow \text{Bij}(Z_{SR}^{2}((X, \rho_{X})) ; A)$, where the latter is the group of bijections. It remains to show that the image of $\Omega$ lies in $\text{Aut}(Z_{SR}^{2}((X, \rho_{X})) ; A)$. This can be easily seen as
	\begin{eqnarray*}
		& &\Omega(\zeta,\theta)(\alpha+\beta)_{x,y} \\
		&=&\theta \big((\alpha + \beta )_{\zeta^{-1}(x),\zeta^{-1}(y)} \big)\\
		&=&\theta \big(\alpha_{\zeta^{-1}(x),\zeta^{-1}(y)} + \beta_{\zeta^{-1}(x),\zeta^{-1}(y)} \big)\\
		&=&\theta \big(\alpha_{\zeta^{-1}(x),\zeta^{-1}(y)} \big) + \theta \big(\beta_{\zeta^{-1}(x),\zeta^{-1}(y)} \big) \\
		&=&\Omega(\zeta,\theta)(\alpha)_{x,y}+\Omega(\zeta,\theta)(\beta)_{x,y},
	\end{eqnarray*}
and the proof is complete.
\end{proof}

\begin{proposition}
	The group $\operatorname{Aut}(X, \rho_{X}) \times \operatorname{Aut} (A)$ acts from the left on $H_{SR}^{2}((X, \rho_{X}); A)$ via automorphisms by $\big((\zeta,\theta), [\alpha] \big) \mapsto [{}^{(\zeta,\theta)}\alpha]$ for $(\zeta, \theta) \in \operatorname{Aut}(X, \rho_{X}) \times \operatorname{Aut}(A)$ and $[\alpha] \in H_{S R}^{2}((X,\rho_{X});A)$. 	
\end{proposition}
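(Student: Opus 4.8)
The plan is to show that the action of $\operatorname{Aut}(X,\rho_X)\times\operatorname{Aut}(A)$ on the cocycle group $Z^2_{SR}((X,\rho_X);A)$ descends to the cohomology group $H^2_{SR}((X,\rho_X);A)=Z^2_{SR}/B^2_{SR}$. Since the previous lemma already establishes that $(\zeta,\theta)\mapsto\,^{(\zeta,\theta)}\alpha$ is a left action by automorphisms of $Z^2_{SR}$, and the quotient map $Z^2_{SR}\to H^2_{SR}$ is a group homomorphism, it suffices to prove that the subgroup $B^2_{SR}((X,\rho_X);A)$ of 2-coboundaries is invariant under this action. Once that is done, each automorphism $\Omega(\zeta,\theta)$ of $Z^2_{SR}$ restricts to an automorphism of $B^2_{SR}$, hence induces an automorphism of the quotient $H^2_{SR}$, and the formula $((\zeta,\theta),[\alpha])\mapsto[\,^{(\zeta,\theta)}\alpha]$ is well-defined; the fact that it is a left action by automorphisms is then inherited directly from the cocycle-level action.

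The key computation is therefore: if $\alpha=\delta^1(\lambda)$ for some $\lambda\in C^1_{SR}((X,\rho_X);A)$, then $^{(\zeta,\theta)}\alpha$ is again a coboundary. I would write out $\delta^1$ explicitly using the formula for $\partial_2$ and $\partial_1$ from the preliminaries: a 1-cochain is a map $\lambda:X\to A$ (a $\mathbb{Z}(X,\rho_X)$-linear map out of $C^{SR}_1$), and its coboundary has the shape $\alpha_{x,y}=\delta^1(\lambda)(x,y)=\phi_{x,y}\lambda(x)-\lambda(x*y)+\psi_{x,y}\lambda(y)$ (plus whatever $\eta$-term is forced on $C^{SR}_1$), with the constant-coefficient simplification $\phi_{x,y}=\phi$, $\psi_{x,y}=\psi$, $\eta_x=\eta$ in force. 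Applying $^{(\zeta,\theta)}(-)$ gives
$$
{}^{(\zeta,\theta)}\alpha_{x,y}=\theta\big(\phi\,\lambda(\zeta^{-1}(x))-\lambda(\zeta^{-1}(x)*\zeta^{-1}(y))+\psi\,\lambda(\zeta^{-1}(y))\big),
$$
and since $\theta$ is a $\mathbb{Z}(X,\rho_X)$-module homomorphism it commutes with $\phi$ and $\psi$, while $\zeta$ is a symmetric rack automorphism so $\zeta^{-1}(x)*\zeta^{-1}(y)=\zeta^{-1}(x*y)$. Hence $^{(\zeta,\theta)}\alpha=\delta^1(\mu)$ where $\mu=\theta\circ\lambda\circ\zeta^{-1}$; one also checks $\mu$ respects the $\eta$-relation because $\theta$ commutes with $\eta$ and $\zeta$ intertwines $\rho_X$. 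This exhibits $^{(\zeta,\theta)}\alpha\in B^2_{SR}$.

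The main obstacle, such as it is, is bookkeeping rather than conceptual: one must make sure the candidate $\mu=\theta\lambda\zeta^{-1}$ is genuinely a $\mathbb{Z}(X,\rho_X)$-linear map on $C^{SR}_1(X,\rho_X)$, i.e. that it is compatible with passing to the quotient by $D^{SR}_1$ (the $\eta$-relations $U_1$), which is exactly where the hypotheses ``$\theta\in\operatorname{Aut}(A)$ as a module automorphism'' and ``$\zeta\in\operatorname{Aut}(X,\rho_X)$ as a \emph{symmetric} rack automorphism'' are both used. After that, the remaining claims—left action, action by group automorphisms of $H^2_{SR}$—are formal consequences of the corresponding statements at the cocycle level together with the just-proved invariance of $B^2_{SR}$, so I would dispatch them in one line each by reduction to the previous two lemmas.
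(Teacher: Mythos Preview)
Your proposal is correct and follows essentially the same approach as the paper: reduce to showing that $B^{2}_{SR}$ is invariant under the action, take a coboundary $\delta^{1}(\lambda)$, and verify directly that ${}^{(\zeta,\theta)}(\delta^{1}(\lambda))=\delta^{1}(\mu)$ for $\mu=\theta\circ\lambda\circ\zeta^{-1}$, checking along the way that $\mu$ satisfies the $\eta$-compatibility condition. The paper's computation is identical in substance, only with the notation $\nu$ in place of your $\mu$.
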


\begin{proof}
It suffices to prove that ${}^{(\zeta,\theta)}\beta \in B_{SR}^{2}((X, \rho_{X});A)$ for all $(\zeta,\theta) \in \operatorname{Aut}(X, \rho_{X}) \times \operatorname{Aut} (A)$ and $\beta \in B_{SR}^{2}((X, \rho_{X});A).$ Let  $(\zeta,\theta) \in \operatorname{Aut}(X, \rho_{X}) \times \operatorname{Aut} (A)$ and $\beta \in B_{SR}^{2}((X, \rho_{X});A)$. Then, there exists a map $\lambda:X\rightarrow A$ such that $\beta_{x,y}=\lambda(\phi x - x*y + \psi y)$ and $\lambda(\eta z - \rho_X(z))=0 $ for all $x, y, z \in X$. We define $\nu:X\rightarrow A $ by $\nu(x)=\theta(\lambda(\zeta^{-1}(x)))$ and extending it linearly.  Then, we see that
	\begin{eqnarray*}
 {}^{(\zeta,\theta)}\beta_{x, y} &=&\theta \big(\beta _{\zeta^{-1}(x),\zeta^{-1}(y)} \big) \\
		&= &\theta \big(\lambda \big(\phi\zeta^{-1}(x)-\zeta^{-1}(x)*\zeta^{-1}(y)+\psi\zeta^{-1}(y) \big)\big)\\
		&=& \theta\big(\phi\lambda(\zeta^{-1}(x))-\lambda(\zeta^{-1}(x)*\zeta^{-1}(y))+\psi\lambda\zeta^{-1}(y) \big),\\
		& & ~\textrm{since $\lambda$ is a $\mathbb{Z}(X, \rho_{X})$-module homomorphism}\\
		&=&\phi\theta \big(\lambda({\zeta^{-1}(x)})\big) - \theta \big(\lambda({\zeta^{-1}(x*y)})\big) +\psi\theta \big(\lambda({\zeta^{-1}(y)})\big),\\
		& & ~\textrm{since $\theta$ is a $\mathbb{Z}(X, \rho_{X})$-module automorphism of $A$} \\
		&=&\phi\nu(x)-\nu(x*y)+\psi\nu(y)\\ 
		&=&\nu(\phi x-x*y+\psi y)
	\end{eqnarray*}
and
\begin{eqnarray*}
		\nu(\eta z - \rho_X(z)) &=& \eta\nu(z) - \nu(\rho_X(z))\\
		&=&\eta\theta(\lambda(\zeta^{-1}(z))) - \theta(\lambda(\zeta^{-1}(\rho_X(z)))),\\
	& & ~\textrm{since $\lambda$ and $\theta$ are $\mathbb{Z}(X, \rho_{X})$-module homomorphisms}\\
		&=&\theta \big(\lambda \big(\eta\zeta^{-1}(z) - \rho_X(\zeta^{-1}(z)) \big)\big) \\
		&=&\theta(0)\\
		&=& 0.
	\end{eqnarray*}
	Hence, ${}^{(\zeta,\theta)}\beta \in B_{SR}^{2}((X, \rho_{X});A)$, and the proof is complete.
\end{proof}
\par

Let $\alpha \in Z_{S R}^{2}((X, \rho_{X}); A)$ be a fixed symmetric rack 2-cocycle and $\mathcal{F}=(A, \phi, \psi, \eta)$ be an $(X, \rho_{X})$-module. Define
$$
E(\mathcal{F}, \alpha):=\big\{(x, a) \mid x \in X~\textrm{and}~ a \in A_{x} \big\}.
$$
Next, we define a binary operation on $E(\mathcal{F}, \alpha)$ by
$$
(x, a) \, \tilde{*} \,(y, b)=\big( x * y,~ \phi_{x, y}(a)+\psi_{x, y}(b)+\alpha_{x, y} \big)
$$
and a map $\rho_{E(\mathcal{F}, \alpha)}: E(\mathcal{F}, \alpha) \longrightarrow E(\mathcal{F}, \alpha)$ by
$$
\rho_{E(\mathcal{F}, \alpha)}(x, a) =\big(\rho_{X}(x), ~\eta_{x}(a)\big)
$$
for all $x, y \in X$, $a \in A_{x}$ and $b \in A_{y}$. Then, $(E(\mathcal{F}, \alpha), \rho_{E(\mathcal{F}, \alpha)})$ is an extension of $(X, \rho_{X})$ by $\mathcal{F}$ through $\alpha$. We refer to this extension as the \textit{abelian extension} of $(X, \rho_{X})$ by $\mathcal{F}$ through $\alpha$. As stated earlier, we stick to the case where $A_x=A$, $\phi_{x, y}=\phi$, $\psi_{x, y}=\psi$ and $\eta_{x}=\eta$ for all $x, y \in X$.
\par

Let us define
\begin{eqnarray*}
\operatorname{Aut}_{A} \big(E(\mathcal{F}, \alpha),\rho_{E(\mathcal{F}, \alpha)}\big)	&:=& \Big\{\xi \in \operatorname{Aut} \big(E(\mathcal{F}, \alpha),\rho_{E(\mathcal{F}, \alpha)} \big) \mid \xi(x, s)= \big(\zeta(x), \lambda(x) +\theta(s)\big) \\
	&& \text{for some}~ (\zeta, \theta) \in      \operatorname{Aut}(X, \rho_{X}) \times \operatorname{Aut}(A)~\text{and some map}~ \lambda: X \longrightarrow A\Big\}.
\end{eqnarray*}

\begin{lemma}\label{subgrouplemma}	
$\operatorname{Aut}_{A}\big(E(\mathcal{F}, \alpha),\rho_{E(\mathcal{F}, \alpha)}\big)$ is a subgroup of $\operatorname{Aut} \big(E(\mathcal{F}, \alpha),\rho_{E(\mathcal{F}, \alpha)}\big)$.
\end{lemma}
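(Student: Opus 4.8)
The plan is to verify the three subgroup axioms directly: that $\operatorname{Aut}_{A}$ contains the identity, is closed under composition, and is closed under taking inverses. The identity automorphism of $E(\mathcal{F},\alpha)$ is $\mathrm{id}(x,s)=(x,s)=(\mathrm{id}_X(x),\, 0 + \mathrm{id}_A(s))$, so it lies in $\operatorname{Aut}_A$ by taking $(\zeta,\theta)=(\mathrm{id}_X,\mathrm{id}_A)$ and $\lambda\equiv 0$. This is immediate and needs no work.

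For closure under composition, I would take $\xi_1,\xi_2 \in \operatorname{Aut}_A$ with $\xi_i(x,s)=(\zeta_i(x),\lambda_i(x)+\theta_i(s))$ and simply compute $\xi_1\circ\xi_2$. One gets
\begin{equation*}
(\xi_1\circ\xi_2)(x,s) = \xi_1\big(\zeta_2(x),\,\lambda_2(x)+\theta_2(s)\big) = \Big(\zeta_1\zeta_2(x),\ \lambda_1(\zeta_2(x)) + \theta_1\big(\lambda_2(x)\big) + \theta_1\theta_2(s)\Big).
\end{equation*}
Since $\operatorname{Aut}(X,\rho_X)$ and $\operatorname{Aut}(A)$ are groups, $\zeta_1\zeta_2 \in \operatorname{Aut}(X,\rho_X)$ and $\theta_1\theta_2 \in \operatorname{Aut}(A)$, and the map $x \mapsto \lambda_1(\zeta_2(x)) + \theta_1(\lambda_2(x))$ is a well-defined function $X \to A$; hence $\xi_1\circ\xi_2$ has exactly the required form and lies in $\operatorname{Aut}_A$. (Here I am implicitly using that composition of two symmetric rack automorphisms of $E(\mathcal{F},\alpha)$ is again one, which is part of $\operatorname{Aut}(E(\mathcal{F},\alpha),\rho_{E(\mathcal{F},\alpha)})$ being a group, so membership in the ambient $\operatorname{Aut}$ is automatic and only the shape of the formula needs checking.)

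For closure under inverses, take $\xi \in \operatorname{Aut}_A$ with $\xi(x,s)=(\zeta(x),\lambda(x)+\theta(s))$. Since $\xi$ is bijective, so is $\zeta$ (it is the map induced on first coordinates, which equals $\Gamma$ applied to $\xi$ in the notation to come), and $\theta^{-1} \in \operatorname{Aut}(A)$. Solving $\xi(x,s)=(y,b)$ for $(x,s)$ gives $x=\zeta^{-1}(y)$ and $s = \theta^{-1}\big(b - \lambda(\zeta^{-1}(y))\big) = -\theta^{-1}\lambda(\zeta^{-1}(y)) + \theta^{-1}(b)$, so
\begin{equation*}
\xi^{-1}(y,b) = \Big(\zeta^{-1}(y),\ -\theta^{-1}\lambda\big(\zeta^{-1}(y)\big) + \theta^{-1}(b)\Big),
\end{equation*}
which again has the required form with automorphism part $(\zeta^{-1},\theta^{-1})$ and translation part the map $y \mapsto -\theta^{-1}\lambda(\zeta^{-1}(y))$. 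Thus $\xi^{-1} \in \operatorname{Aut}_A$, completing the verification.

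I do not anticipate a genuine obstacle here; the only points requiring a moment's care are recognizing that membership in the ambient automorphism group $\operatorname{Aut}(E(\mathcal{F},\alpha),\rho_{E(\mathcal{F},\alpha)})$ is inherited for free (since it is already a group), so that one only needs to track that composites and inverses preserve the \emph{special form} $\xi(x,s)=(\zeta(x),\lambda(x)+\theta(s))$, and checking that in each case the resulting first-coordinate map is a symmetric rack automorphism and the resulting second-coordinate linear part is an $A$-automorphism — both of which follow purely from $\operatorname{Aut}(X,\rho_X)$ and $\operatorname{Aut}(A)$ being groups. Writing out the two displayed compositions above and reading off the forms is the entire proof.
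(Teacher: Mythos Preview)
Your proof is correct and, for the composition part, essentially identical to the paper's. For the inverse part you take a slightly cleaner route: you observe that since $\operatorname{Aut}(E(\mathcal{F},\alpha),\rho_{E(\mathcal{F},\alpha)})$ is already a group and $\xi$ belongs to it, the inverse $\xi^{-1}$ automatically lies in the ambient automorphism group, so only the special \emph{form} of $\xi^{-1}$ needs to be verified. The paper instead writes down the candidate $\bar\xi(x,s)=(\zeta^{-1}(x),\theta^{-1}(-\lambda(\zeta^{-1}(x))+s))$ and explicitly checks that $\bar\xi$ is a symmetric rack homomorphism compatible with the good involution (using identities derived from the fact that $\xi$ is one), before noting that $\bar\xi$ is the inverse of $\xi$. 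Your argument avoids this redundant verification and is the more economical of the two; the paper's version has the minor advantage of making the relevant cocycle-type identities \eqref{eq1} and \eqref{eq2} explicit, which are reused later in the section.
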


\begin{proof}
	Let $\xi,\xi' \in \operatorname{Aut}_{A}(E(\mathcal{F}, \alpha),\rho_{E(\mathcal{F}, \alpha)})$ be such that $\xi(x, s)=(\zeta(x), \lambda(x) +\theta(s))$ and $\xi'(x, s)=(\zeta'(x), \lambda'(x) +\theta'(s))$ for some $(\zeta, \theta), (\zeta', \theta')  \in      \operatorname{Aut}(X, \rho_{X}) \times \operatorname{Aut}(A)$ and some maps $\lambda, \lambda': X \longrightarrow A$. Then, we have
\begin{equation}\label{product auto formula}
\xi \xi'(x,s) = \xi \big(\zeta'(x), ~\lambda'(x) + \theta'(s)\big) = \big(\zeta\zeta'(x),~ (\lambda\zeta' + \theta\lambda')(x) + \theta\theta'(s)\big),
\end{equation}
which shows that $\xi\xi' \in \operatorname{Aut}_{A}(E(\mathcal{F}, \alpha),\rho_{E(\mathcal{F}, \alpha)})$. Next, we define 
$$\bar{\xi}(x,s)=\big(\zeta^{-1}(x), ~\theta^{-1}(-\lambda(\zeta^{-1}(x))+s) \big).$$  The fact that $\xi$ is a symmetric rack homomorphism gives
	\begin{equation}
		\phi \theta^{-1} \big(-\lambda(\zeta^{-1}(x))\big)+\psi\theta^{-1}\big(-\lambda(\zeta^{-1}(y))\big)+\alpha_{\zeta^{-1}(x),\zeta^{-1}(y)}=\theta^{-1} \big(-\lambda(\zeta^{-1}(x*y))\big)+\theta^{-1}(\alpha_{x,y})\label{eq1} 
	\end{equation} 
and
	\begin{equation}
		\theta^{-1} \big(\lambda(\zeta^{-1}(\rho_X(x)))\big) = \eta\big(\theta^{-1}(\lambda(\zeta^{-1}(x)))\big).
		\label{eq2}	
	\end{equation}
We now consider
	\begin{eqnarray*}
&& \bar{\xi} \big((x,a)\, \tilde{*} \,(y,b) \big)\\
		&=& \bar{\xi} \big(x*y, ~\phi a + \psi b + \alpha_{x,y} \big) \\
		&=& \big(\zeta^{-1}(x*y), ~\theta^{-1}(-\lambda(\zeta^{-1}(x*y)) + \phi a + \psi b + \alpha_{x,y}) \big) \\
		&=& \big(\zeta^{-1}(x*y), ~\theta^{-1}(-\lambda \zeta^{-1}(x*y)) + \theta^{-1}(\alpha_{x,y}) + \phi \theta^{-1}(a) + \psi \theta^{-1}(b) \big) \\
		&=& \big(\zeta^{-1}(x*y), ~\phi \theta^{-1}(-\lambda(\zeta^{-1}(x))) + \psi \theta^{-1}(-\lambda(\zeta^{-1}(y))) + \alpha_{\zeta^{-1}(x), \zeta^{-1}(y)}+ \phi \theta^{-1}(a) + \psi \theta^{-1}(b) \big),\\
		&& ~\text{using \eqref{eq1}} \\
		&=& \big(\zeta^{-1}(x)*\zeta^{-1}(y), ~\phi \theta^{-1}(-\lambda({\zeta^{-1}(x)}) + a) + \psi \theta^{-1}(-\lambda({\zeta^{-1}(y)}) + b) + \alpha_{\zeta^{-1}(x), \zeta^{-1}(y)}\big) \\
		&=& \big(\zeta^{-1}(x), \theta^{-1}(-\lambda({\zeta^{-1}(x)}) + a)\big) \, \tilde{*} \, \big(\zeta^{-1}(y), \theta^{-1}(-\lambda({\zeta^{-1}(y)}) + b) \big) \\
		&=& \bar{\xi}(x,a) \, \tilde{*} \, \bar{\xi}(y,b)
	\end{eqnarray*}
and
\begin{eqnarray*}
&& \bar{\xi}\big(\rho_{E(\mathcal{F}, ~\alpha)}(x,a) \big) \\
		&=& \bar{\xi} \big(\rho_X(x), \eta a \big) \\
		&=& \big(\zeta^{-1}(\rho_X(x)), ~\theta^{-1}(-\lambda(\zeta^{-1}(\rho_X(x)))+ \eta a)\big)\\
		&=&\big( \rho_X(\zeta^{-1}(x)), ~\eta(\theta^{-1}(-\lambda(\zeta^{-1}(x))))+\eta \theta^{-1}(a) \big),\\
		 && \text{using \eqref{eq2} and the fact that $\theta^{-1}$ is a $\mathbb{Z}(X, \rho_{X})$-module homomorphism}\\
		&=&\big( \rho_X(\zeta^{-1}(x)), ~\eta(\theta^{-1}(-\lambda(\zeta^{-1}(x))+a))\big)\\
		&=&\rho_{E(\mathcal{F}, ~\alpha)} \big(\zeta^{-1}(x), ~\theta^{-1}(-\lambda(\zeta^{-1}(x))+a)\big)\\
		&=&\rho_{E(\mathcal{F}, ~\alpha)} \big(\bar{\xi}(x,a) \big).
	\end{eqnarray*}
It is easy to see that $\bar{\xi}$ is bijective, and hence $\bar{\xi} \in \operatorname{Aut}_{A}(E(\mathcal{F}, \alpha),\rho_{E(\mathcal{F}, \alpha)}).$ Further, a direct calculation shows that $\bar{\xi}$ and $\xi$ are indeed inverses of each other, which completes the proof of the lemma.
\end{proof}

The discussion and observations that follow, serve to prepare the ground for the upcoming proposition. Note that, there is a natural restriction map

$$
\Gamma: \operatorname{Aut}_{A}(E(\mathcal{F}, \alpha),\rho_{E(\mathcal{F}, \alpha)}) \longrightarrow \operatorname{Aut}(X, \rho_{X}) \times \operatorname{Aut}(A)
$$
given by $\Gamma(\xi)=(\zeta, \theta)$, where $\xi(x, s)=(\zeta(x), \lambda(x) +\theta(s))$. It follows from \eqref{product auto formula} that $\Gamma$ is a group homomorphism.
\par

We now define another map
$$
\Lambda_{[\alpha]}: \operatorname{Aut}(X, \rho_{X}) \times \operatorname{Aut}(A) \longrightarrow H_{SR}^{2}((X, \rho_{X});A). 
$$
Given a pair $(\zeta, \theta) \in \operatorname{Aut}(X,\rho_X) \times \operatorname{Aut}(A)$, consider the elements  ${}^{(\zeta,\theta)}[\alpha]$ and $[\alpha]$ of $H_{SR}^{2}((X, \rho_{X});A)$. Since the group $H_{S R}^{2}((X, \rho_{X}) ; A)$ has a free and transitive left action on itself by multiplication, there is a unique element $\Lambda_{[\alpha]}(\zeta, \theta) \in H_{S R}^{2}((X, \rho_{X}) ; A)$ such that
$$
{}^{\Lambda_{[\alpha]}(\zeta, \theta)}\big({}^{(\zeta, \theta)}[\alpha] \big)=[\alpha]. 
$$
Let us set
$$
\big(\operatorname{Aut}(X, \rho_{X}) \times \operatorname{Aut}(A)\big)_{[\alpha]} := \big\{(\zeta, \theta) \in \operatorname{Aut}(X, \rho_{X}) \times \operatorname{Aut}(A) \mid {}^{(\zeta,\theta)}[\alpha]=[\alpha] \big\}.		
$$

\begin{proposition}
The maps $\Gamma$  and $\Lambda_{[\alpha]}$ satisfy 
$$\operatorname{Im}(\Gamma)=\big(\operatorname{Aut}(X, \rho_{X}) \times\operatorname{Aut}(A)\big)_{[\alpha]}=\Lambda_{[\alpha]}^{-1}(0),$$ where $0$ is the identity element of $H_{SR}^{2}((X,\rho_{X});A)$.
\end{proposition}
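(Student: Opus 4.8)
The plan is to prove the two asserted equalities in turn. The identity $\big(\operatorname{Aut}(X, \rho_{X}) \times \operatorname{Aut}(A)\big)_{[\alpha]} = \Lambda_{[\alpha]}^{-1}(0)$ is formal: writing the group law on $H_{SR}^{2}((X,\rho_X);A)$ additively, the regular action of this group on itself by multiplication has the identity $0$ acting trivially, so the defining equation ${}^{\Lambda_{[\alpha]}(\zeta,\theta)}\big({}^{(\zeta,\theta)}[\alpha]\big) = [\alpha]$ reads $\Lambda_{[\alpha]}(\zeta,\theta) = [\alpha] - {}^{(\zeta,\theta)}[\alpha]$. Hence $\Lambda_{[\alpha]}(\zeta,\theta) = 0$ precisely when ${}^{(\zeta,\theta)}[\alpha] = [\alpha]$, which is exactly the defining condition for $(\zeta,\theta)$ to lie in $\big(\operatorname{Aut}(X, \rho_{X}) \times \operatorname{Aut}(A)\big)_{[\alpha]}$.

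The substantive part is $\operatorname{Im}(\Gamma) = \big(\operatorname{Aut}(X, \rho_{X}) \times \operatorname{Aut}(A)\big)_{[\alpha]}$, which I would prove by a double inclusion, the two directions being the two sides of the same computation already rehearsed in Lemma~\ref{subgrouplemma}. For ``$\subseteq$'', let $\xi \in \operatorname{Aut}_{A}(E(\mathcal{F},\alpha),\rho_{E(\mathcal{F},\alpha)})$ with $\xi(x,s) = (\zeta(x), \lambda(x)+\theta(s))$, so that $\Gamma(\xi) = (\zeta,\theta)$. Expanding the condition that $\xi$ preserves $\tilde{*}$ and comparing the $A$-components, using $\zeta(x*y) = \zeta(x)*\zeta(y)$ and that $\theta$ commutes with $\phi,\psi$, yields
$$\lambda(x*y) + \theta(\alpha_{x,y}) = \phi\lambda(x) + \psi\lambda(y) + \alpha_{\zeta(x),\zeta(y)},$$
while expanding $\xi \circ \rho_{E(\mathcal{F},\alpha)} = \rho_{E(\mathcal{F},\alpha)} \circ \xi$ gives $\lambda(\rho_X(x)) = \eta\,\lambda(x)$; the latter says exactly that $\lambda$ descends to a $1$-cochain on $C_1^{SR}(X,\rho_X)$. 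Substituting $x \mapsto \zeta^{-1}(x)$ and $y \mapsto \zeta^{-1}(y)$ in the first identity and putting $\mu := \lambda \circ \zeta^{-1}$ (again a $1$-cochain, since $\zeta$ commutes with $\rho_X$), one obtains ${}^{(\zeta,\theta)}\alpha_{x,y} - \alpha_{x,y} = \phi\mu(x) - \mu(x*y) + \psi\mu(y) = \delta^1\mu(x,y)$, so ${}^{(\zeta,\theta)}\alpha - \alpha \in B_{SR}^{2}((X,\rho_X);A)$ and hence ${}^{(\zeta,\theta)}[\alpha] = [\alpha]$, i.e. $(\zeta,\theta) \in \big(\operatorname{Aut}(X, \rho_{X}) \times \operatorname{Aut}(A)\big)_{[\alpha]}$.

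For the reverse inclusion, take $(\zeta,\theta)$ with ${}^{(\zeta,\theta)}[\alpha] = [\alpha]$, so that ${}^{(\zeta,\theta)}\alpha - \alpha = \delta^1\mu$ for some $1$-cochain $\mu : X \to A$, i.e. $\eta\,\mu(z) = \mu(\rho_X(z))$ for all $z \in X$. Put $\lambda := \mu \circ \zeta$ and define $\xi(x,s) := (\zeta(x), \lambda(x)+\theta(s))$. Reading the computation of the previous paragraph backwards, the coboundary identity is precisely equivalent to $\xi$ preserving $\tilde{*}$, and the relation $\lambda(\rho_X(x)) = \eta\,\lambda(x)$ — which holds because $\mu$ is a $1$-cochain and $\zeta\rho_X = \rho_X\zeta$ — is precisely equivalent to $\xi$ commuting with $\rho_{E(\mathcal{F},\alpha)}$; bijectivity of $\xi$ is immediate from that of $\zeta$ and $\theta$, the explicit inverse being the one already written down in the proof of Lemma~\ref{subgrouplemma}. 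Hence $\xi \in \operatorname{Aut}_{A}(E(\mathcal{F},\alpha),\rho_{E(\mathcal{F},\alpha)})$ with $\Gamma(\xi) = (\zeta,\theta)$, so $(\zeta,\theta) \in \operatorname{Im}(\Gamma)$. The only point that needs genuine care — and the step I would verify most carefully at each stage — is that the auxiliary maps $\mu$ and $\lambda$ really are $1$-cochains, i.e. that they are compatible with the good involution through $\eta$; granting that, everything reduces to the bookkeeping translation between ``$\xi$ is a symmetric rack automorphism of $E(\mathcal{F},\alpha)$ lying over $(\zeta,\theta)$'' and ``the $2$-cocycles $\alpha$ and ${}^{(\zeta,\theta)}\alpha$ differ by a coboundary''.
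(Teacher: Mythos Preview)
Your proof is correct and follows essentially the same route as the paper's: the equality with $\Lambda_{[\alpha]}^{-1}(0)$ is dismissed as formal, and the equality $\operatorname{Im}(\Gamma)=\big(\operatorname{Aut}(X,\rho_X)\times\operatorname{Aut}(A)\big)_{[\alpha]}$ is established by a double inclusion that unwinds the symmetric rack homomorphism conditions on $\xi$ into the coboundary identity for ${}^{(\zeta,\theta)}\alpha-\alpha$ and back. The only cosmetic difference is bookkeeping: the paper parametrizes $\xi$ as $\xi(x,s)=(\zeta(x),\lambda(\zeta(x))+\theta(s))$ so that its $\lambda$ is already your $\mu=\lambda\circ\zeta^{-1}$, avoiding the explicit substitution $x\mapsto\zeta^{-1}(x)$, and it treats the two inclusions in the opposite order.
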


\begin{proof}
It follows from the definition of $\Lambda$ that $(\operatorname{Aut}(X, \rho_{X}) \times\operatorname{Aut}(A))_{[\alpha]}=\Lambda_{[\alpha]}^{-1}(0)$. Suppose that $(\zeta,\theta)\in(\operatorname{Aut}(X, \rho_{X}) \times\operatorname{Aut}(A))_{[\alpha]}$. Then, there exists a map $\lambda: X \longrightarrow A$ such that
	\begin{equation}
		\big({}^{(\zeta,\theta)}\alpha-\alpha \big)(x, y)=\lambda(\phi x-x *y+\psi y)
		\label{eq3}
	\end{equation}	
and
	\begin{equation}
		\lambda(\eta z-\rho_{X}(z))=0
		\label{eq4}
	\end{equation}
for all $x, y, z \in X$. Define $\xi: (E(\mathcal{F}, \alpha),\rho_{E(\mathcal{F}, \alpha)}) \longrightarrow(E(\mathcal{F}, \alpha),\rho_{E(\mathcal{F}, \alpha)})$ by
$$
\xi(x,s)=\big(\zeta(x),~\lambda(\zeta(x))+\theta(s)\big) =\big(\zeta(x),~\lambda'(x)+\theta(s)\big), 
$$
where $\lambda' :=\lambda \zeta: X \longrightarrow A.$	It is easy to see that $\xi$ is a bijection. Also, it follows from \eqref{eq3} and \eqref{eq4} that $\xi$ is indeed a symmetric rack homomorphism. Hence,  $\xi\in \operatorname{Aut}_{A}(E(\mathcal{F}, \alpha),\rho_{E(\mathcal{F}, \alpha)})$ and $\Gamma(\xi)=(\zeta,\theta)$, which helps us conclude that $(\operatorname{Aut}(X, \rho_{X}) \times\operatorname{Aut}(A))_{[\alpha]}\subseteq \operatorname{Im}(\Gamma)$.
\par

For the converse, we take an element $(\zeta,\theta)\in \operatorname{Im}(\Gamma).$ Then there exists $\xi \in \operatorname{Aut}_{A}(E(\mathcal{F}, \alpha),\rho_{E(\mathcal{F}, \alpha)})$ such that $\Gamma(\xi)=(\zeta,\theta)$. By definition of $\Gamma$,  there exists a map  $\lambda: X \longrightarrow A$ such that 
$$\xi(x,s)=\big(\zeta(x),\lambda(\zeta(x))+\theta(s)\big).$$ Now, the fact that $\xi$ is a symmetric rack homomorphism, gives us \eqref{eq3} and \eqref{eq4}. This shows that ${}^{(\zeta,\theta)}[\alpha]=[\alpha]$, and hence $ \operatorname{Im}(\Gamma)\subseteq (\operatorname{Aut}(X, \rho_{X}) \times\operatorname{Aut}(A))_{[\alpha]}$.
\end{proof}

\begin{proposition}
The groups $\operatorname{Ker}(\Gamma)$ and $ Z_{SR}^{1}((X, \rho_{X}); A)$ are isomorphic, where 
	$$Z_{SR}^{1} \big((X, \rho_{X}); A\big)= \big\{\lambda: X \longrightarrow A \mid \lambda(\phi x-x * y+\psi y)=0~ \text{and}~\lambda(\eta z-\rho_{X}(z))=0~\textrm{for all}~ z,  x, y \in X \big\}.$$
\end{proposition}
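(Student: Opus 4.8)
The plan is to construct an explicit isomorphism between $\operatorname{Ker}(\Gamma)$ and $Z_{SR}^{1}((X, \rho_{X}); A)$. First I would describe the kernel concretely. By definition of $\Gamma$, an element $\xi \in \operatorname{Aut}_{A}(E(\mathcal{F}, \alpha),\rho_{E(\mathcal{F}, \alpha)})$, say $\xi(x,s) = (\zeta(x),\, \lambda(x) + \theta(s))$, lies in $\operatorname{Ker}(\Gamma)$ if and only if $\zeta = \id_{X}$ and $\theta = \id_{A}$; equivalently, $\xi = \xi_{\lambda}$ where $\xi_{\lambda}(x,s) := (x,\, \lambda(x) + s)$ for some (uniquely determined) map $\lambda: X \to A$. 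So it remains to determine for which $\lambda$ the map $\xi_{\lambda}$ is an automorphism, and then to verify that $\lambda \mapsto \xi_{\lambda}$ is a group isomorphism.

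Next I would impose the symmetric rack automorphism conditions on $\xi_{\lambda}$. Comparing
$$\xi_{\lambda}\big((x,a)\,\tilde{*}\,(y,b)\big) = \big(x * y,\ \lambda(x * y) + \phi a + \psi b + \alpha_{x,y}\big)$$
with
$$\xi_{\lambda}(x,a)\,\tilde{*}\,\xi_{\lambda}(y,b) = \big(x * y,\ \phi \lambda(x) + \psi \lambda(y) + \phi a + \psi b + \alpha_{x,y}\big)$$
shows that $\xi_{\lambda}$ respects $\tilde{*}$ precisely when $\lambda(x * y) = \phi \lambda(x) + \psi \lambda(y)$ for all $x, y \in X$, that is, $\lambda(\phi x - x * y + \psi y) = 0$ once $\lambda$ is extended $\mathbb{Z}(X,\rho_{X})$-linearly. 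Likewise, comparing $\xi_{\lambda}\big(\rho_{E(\mathcal{F}, \alpha)}(x,a)\big) = \big(\rho_{X}(x),\, \lambda(\rho_{X}(x)) + \eta a\big)$ with $\rho_{E(\mathcal{F}, \alpha)}\big(\xi_{\lambda}(x,a)\big) = \big(\rho_{X}(x),\, \eta \lambda(x) + \eta a\big)$ shows that $\xi_{\lambda}$ commutes with $\rho_{E(\mathcal{F}, \alpha)}$ precisely when $\lambda(\rho_{X}(x)) = \eta \lambda(x)$, i.e. $\lambda(\eta z - \rho_{X}(z)) = 0$. Conversely, if $\lambda$ satisfies both conditions, running the same computations backwards shows $\xi_{\lambda}$ is a symmetric rack homomorphism; it is bijective with two-sided inverse $\xi_{-\lambda}$, so $\xi_{\lambda} \in \operatorname{Aut}_{A}(E(\mathcal{F}, \alpha),\rho_{E(\mathcal{F}, \alpha)})$ and hence $\xi_{\lambda} \in \operatorname{Ker}(\Gamma)$. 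This yields a bijection $\Psi: \operatorname{Ker}(\Gamma) \to Z_{SR}^{1}((X, \rho_{X}); A)$, $\xi_{\lambda} \mapsto \lambda$, with inverse $\lambda \mapsto \xi_{\lambda}$.

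Finally I would check that $\Psi$ is a group homomorphism. Specializing formula \eqref{product auto formula} to $\zeta = \zeta' = \id_{X}$ and $\theta = \theta' = \id_{A}$ gives $\xi_{\lambda}\,\xi_{\lambda'}(x,s) = \big(x,\ (\lambda + \lambda')(x) + s\big) = \xi_{\lambda + \lambda'}(x,s)$, so $\Psi(\xi_{\lambda}\,\xi_{\lambda'}) = \lambda + \lambda' = \Psi(\xi_{\lambda}) + \Psi(\xi_{\lambda'})$ (where $Z_{SR}^{1}((X, \rho_{X}); A)$ carries its pointwise addition). Hence $\Psi$ is an isomorphism of groups.

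All the computations above are routine; the only point requiring attention is recognising that the two homomorphism constraints on $\xi_{\lambda}$ are exactly the two defining relations of $Z_{SR}^{1}((X, \rho_{X}); A)$ — the constraint from $\tilde{*}$ becoming the $1$-cocycle condition $\lambda(\phi x - x * y + \psi y) = 0$ and the constraint from $\rho_{E(\mathcal{F}, \alpha)}$ becoming the relation $\lambda \circ \rho_{X} = \eta \circ \lambda$ coming from $D_{1}^{SR}(X, \rho_{X})$ — which relies on the standing hypothesis that $\phi_{x,y}, \psi_{x,y}, \eta_{x}$ do not depend on $x, y$.
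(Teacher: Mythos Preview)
Your proof is correct and follows essentially the same approach as the paper: both identify $\operatorname{Ker}(\Gamma)$ with the maps $\xi_\lambda(x,s)=(x,\lambda(x)+s)$, unwind the symmetric rack homomorphism conditions on $\xi_\lambda$ to obtain exactly the two defining relations of $Z_{SR}^{1}((X,\rho_X);A)$, and then use \eqref{product auto formula} to check that $\xi_\lambda\mapsto\lambda$ is a group homomorphism. Your version is simply more explicit in the computations (and notes the inverse $\xi_{-\lambda}$), but there is no substantive difference in strategy.
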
 

\begin{proof}
It is easy to see that
	\begin{eqnarray*}
		\operatorname{Ker}(\Gamma) &=& \left\{\xi ~\Big \vert ~
		\begin{array}{l}
			\xi: (E(\mathcal{F}, \alpha), \rho_{E(\mathcal{F}, \alpha)}) \longrightarrow (E(\mathcal{F}, \alpha), \rho_{E(\mathcal{F}, \alpha)})~ \text{is a symmetric rack homomorphism} \\
			\text{of the form } \xi(x, s) = (x,\lambda(x) + s) \text{ for some } \lambda: X \longrightarrow A
		\end{array} 
		\right\} \\[1em]
		&=& \left\{\xi ~\Big \vert ~ 
		\begin{array}{l}
			\xi: (E(\mathcal{F}, \alpha), \rho_{E(\mathcal{F}, \alpha)}) \longrightarrow (E(\mathcal{F}, \alpha), \rho_{E(\mathcal{F}, \alpha)}) ~\text{is a map of the form} \\
			\xi(x, s) = (x,\lambda(x) + s)~ \text{ for some } ~\lambda: X \longrightarrow A~ \text{which satisfies}~\\
 \lambda(\phi x - x*y + \psi y)=0~\text{ and}~ \lambda(\eta z - \rho_{X}(z))=0~  \text{ for all}~ x, y, z \in X\\
		\end{array} 
		\right\} \\[1em]
		&=& \left\{\xi ~\Big \vert ~ 
		\begin{array}{l}
			\xi: (E(\mathcal{F}, \alpha), \rho_{E(\mathcal{F}, \alpha)}) \longrightarrow (E(\mathcal{F}, \alpha), \rho_{E(\mathcal{F}, \alpha)})~ \text{is a map of the form} \\
			\xi(x, s) = (x,\lambda(x) + s) ~\text{ for some 1-cocycle } ~\lambda:X \to A
		\end{array} 
		\right\}.
	\end{eqnarray*}
We define a map $\Theta:\operatorname{Ker}(\Gamma) \rightarrow Z_{SR}^{1}((X, \rho_{X}); A)$ by $\Theta(\xi)=\lambda$. It is straightforward to check that $\Theta$ is a bijection. Further, $\Theta$ is a homomorphism of groups follows from \eqref{product auto formula}, which completes the proof.
\end{proof}

With the necessary components developed, we can now assemble them to obtain our main theorem. The result yields a four-term exact sequence, which we will refer to as a Wells-type exact sequence, named after Wells \cite{CharlesWells71}, who established a similar exact sequence for group extensions.

\begin{theorem} \label{MainTheoremWells}
Let $(X, \rho_{X})$ be a symmetric rack, $A$ be a $\mathbb{Z}(X, \rho_{X})$-module, $\alpha \in Z_{S R}^{2}((X, \rho_{X}) ; A)$ and $E(\mathcal{F}, \alpha)$ the abelian extension of $(X, \rho_{X})$ by $\mathcal{F}$ through $\alpha$. Then, there is an exact sequence of groups
$$ 0\to Z_{SR}^{1} \big((X, \rho_{X});A \big)\to \operatorname{Aut}_{A} \big(E(\mathcal{F}, \alpha), \rho_{E(\mathcal{F}, \alpha)} \big) \stackrel{\Gamma}{\to}  \operatorname{Aut} (X, \rho_{X})\times\operatorname{Aut}(A) \stackrel{\Lambda_{[\alpha]}}{\to}  H_{SR}^{2} \big((X,\rho_{X});A \big). 
	$$
Here, $\Lambda_{[\alpha]}$ is a set-theoretic map, and the exactness at $\operatorname{Aut}((X, \rho_{X}))\times\operatorname{Aut}(A)$ means that $\operatorname{Im}(\Gamma)=\Lambda_{[\alpha]}^{-1}(0)$.
\end{theorem}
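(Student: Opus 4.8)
The strategy is to assemble the exact sequence directly from the three propositions proved above in this section, since each of its exactness assertions has essentially already been verified; what remains is bookkeeping. First I would pin down the maps. The leftmost arrow $\iota\colon Z_{SR}^{1}((X,\rho_{X});A)\to\operatorname{Aut}_{A}(E(\mathcal{F},\alpha),\rho_{E(\mathcal{F},\alpha)})$ is the inverse of the isomorphism $\Theta$ of the last proposition; explicitly it sends a $1$-cocycle $\lambda$ to the automorphism $\xi_{\lambda}(x,s)=(x,\lambda(x)+s)$, which lies in $\operatorname{Ker}(\Gamma)$ by that proposition. That $\iota$ is a homomorphism of groups is exactly the assertion that $\Theta$ is (the homomorphism property being read off from \eqref{product auto formula}, which gives $\xi_{\lambda}\xi_{\lambda'}=\xi_{\lambda+\lambda'}$), and $\iota$ is injective because $\lambda$ is recovered from $\xi_{\lambda}$ by evaluating the second coordinate at $(x,0)$. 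The middle arrow $\Gamma$ is a group homomorphism by \eqref{product auto formula}, and the rightmost arrow is the set-theoretic map $\Lambda_{[\alpha]}$, whose well-definedness rests on the free and transitive action of $H_{SR}^{2}((X,\rho_{X});A)$ on itself.

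Next I would read off exactness at each node. Exactness at $Z_{SR}^{1}((X,\rho_{X});A)$ is precisely the injectivity of $\iota$ just noted. Exactness at $\operatorname{Aut}_{A}(E(\mathcal{F},\alpha),\rho_{E(\mathcal{F},\alpha)})$ is the statement $\operatorname{Im}(\iota)=\operatorname{Ker}(\Gamma)$: by the last proposition, $\operatorname{Ker}(\Gamma)$ consists exactly of the automorphisms of the form $\xi(x,s)=(x,\lambda(x)+s)$ with $\lambda$ a $1$-cocycle, and this set is by definition $\operatorname{Im}(\iota)$. Finally, exactness at $\operatorname{Aut}(X,\rho_{X})\times\operatorname{Aut}(A)$, understood as $\operatorname{Im}(\Gamma)=\Lambda_{[\alpha]}^{-1}(0)$, is nothing but the penultimate proposition, which gives $\operatorname{Im}(\Gamma)=(\operatorname{Aut}(X,\rho_{X})\times\operatorname{Aut}(A))_{[\alpha]}=\Lambda_{[\alpha]}^{-1}(0)$; here the computation linking the homomorphism condition on an element of $\operatorname{Aut}_{A}(E(\mathcal{F},\alpha))$ to \eqref{eq3} and \eqref{eq4}, already carried out, is what powers the identifications of both $\operatorname{Ker}(\Gamma)$ and $\operatorname{Im}(\Gamma)$.

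For the symmetric quandle analogue I would run the identical argument, replacing $Z_{SR}^{n}$, $H_{SR}^{2}$ and $\operatorname{Aut}(X,\rho_{X})$ by their symmetric quandle counterparts $Z_{SQ}^{n}$, $H_{SQ}^{2}$, and invoking the quandle versions of the relevant statements (Theorem \ref{dynamical cocycles vs symmetric cocycles} and the preceding lemmas/propositions), the extra normalization $\sigma(x,x)=0$ being preserved by all the maps involved. The main point requiring care — and the closest thing to an obstacle — is conceptual rather than computational: $\Lambda_{[\alpha]}$ is genuinely only a map of sets (its target is not even a group acting on itself canonically in a way compatible with composition on the source), so "exactness at the last term" must be interpreted in the precise sense stated, namely $\operatorname{Im}(\Gamma)=\Lambda_{[\alpha]}^{-1}(0)$. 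Once this is agreed and the three propositions are in hand, no further computation is needed to conclude.
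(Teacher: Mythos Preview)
Your proposal is correct and follows exactly the paper's approach: the theorem is stated as an assembly of the preceding propositions (identifying $\operatorname{Ker}(\Gamma)\cong Z_{SR}^{1}$ and $\operatorname{Im}(\Gamma)=\Lambda_{[\alpha]}^{-1}(0)$), with no further argument given beyond what you spell out. If anything, you are more explicit than the paper about defining the injection $\iota=\Theta^{-1}$ and checking it is a group homomorphism via \eqref{product auto formula}.
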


\begin{corollary}
Let $(X, \rho_{X})$ be a symmetric rack, $A$ be a $\mathbb{Z}(X, \rho_{X})$-module, $\alpha \in Z_{S R}^{2}((X, \rho_{X}) ; A)$ and $E(\mathcal{F}, \alpha)$ the abelian extension of $(X, \rho_{X})$ by $\mathcal{F}$ through $\alpha$. Then, there is a short exact sequence of groups
	$$
	0\to Z_{SR}^{1}\big((X, \rho_{X});A \big)\to \operatorname{Aut}_{A} \big(E(\mathcal{F}, \alpha), \rho_{E(\mathcal{F}, \alpha)}\big) \stackrel{\Gamma}{\to}  \big(\operatorname{Aut}(X, \rho_{X}) \times\operatorname{Aut}(A) \big)_{[\alpha]} \to 0.
	$$
\end{corollary}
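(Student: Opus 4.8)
The plan is to read the Corollary off from Theorem~\ref{MainTheoremWells} together with the Proposition that established
$\operatorname{Im}(\Gamma) = \big(\operatorname{Aut}(X, \rho_{X}) \times \operatorname{Aut}(A)\big)_{[\alpha]} = \Lambda_{[\alpha]}^{-1}(0)$. First I would check that $\big(\operatorname{Aut}(X, \rho_{X}) \times \operatorname{Aut}(A)\big)_{[\alpha]}$ is genuinely a subgroup of $\operatorname{Aut}(X, \rho_{X}) \times \operatorname{Aut}(A)$: by the preceding Proposition the group $\operatorname{Aut}(X, \rho_{X}) \times \operatorname{Aut}(A)$ acts on $H_{SR}^{2}((X, \rho_{X}); A)$ by automorphisms, and $\big(\operatorname{Aut}(X, \rho_{X}) \times \operatorname{Aut}(A)\big)_{[\alpha]}$ is by definition the stabiliser of $[\alpha]$ under this action, and stabilisers are subgroups. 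Since $\operatorname{Im}(\Gamma)$ equals this subgroup, we may regard $\Gamma$ as a homomorphism onto $\big(\operatorname{Aut}(X, \rho_{X}) \times \operatorname{Aut}(A)\big)_{[\alpha]}$, so the sequence in the Corollary is a legitimate sequence of group homomorphisms.

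Next I would verify exactness term by term. Exactness at $Z_{SR}^{1}((X, \rho_{X});A)$ and at $\operatorname{Aut}_{A}(E(\mathcal{F}, \alpha), \rho_{E(\mathcal{F}, \alpha)})$ is verbatim the same as in Theorem~\ref{MainTheoremWells}: the first arrow is the composite of the isomorphism $\Theta^{-1}\colon Z_{SR}^{1}((X, \rho_{X});A) \xrightarrow{\ \sim\ } \operatorname{Ker}(\Gamma)$ with the inclusion $\operatorname{Ker}(\Gamma) \hookrightarrow \operatorname{Aut}_{A}(E(\mathcal{F}, \alpha), \rho_{E(\mathcal{F}, \alpha)})$, hence it is injective and its image is exactly $\operatorname{Ker}(\Gamma)$. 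Exactness at $\big(\operatorname{Aut}(X, \rho_{X}) \times \operatorname{Aut}(A)\big)_{[\alpha]}$ is precisely surjectivity of the corestricted $\Gamma$, i.e. $\operatorname{Im}(\Gamma) = \big(\operatorname{Aut}(X, \rho_{X}) \times \operatorname{Aut}(A)\big)_{[\alpha]}$, which is the content of the Proposition quoted above. Appending $\to 0$ on the right then yields the asserted short exact sequence.

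For the Corollary itself there is essentially no obstacle beyond this bookkeeping; the genuine content has already been carried out in the run-up to Theorem~\ref{MainTheoremWells}. The step I would simply invoke, and which is the only nonformal ingredient, is the hard half of $\operatorname{Im}(\Gamma) = \Lambda_{[\alpha]}^{-1}(0)$: given $(\zeta, \theta)$ with ${}^{(\zeta,\theta)}[\alpha] = [\alpha]$, one picks $\lambda\colon X \to A$ with $\big({}^{(\zeta,\theta)}\alpha - \alpha\big)(x,y) = \lambda(\phi x - x*y + \psi y)$ and $\lambda(\eta z - \rho_{X}(z)) = 0$, and checks that $\xi(x,s) = \big(\zeta(x),\, \lambda(\zeta(x)) + \theta(s)\big)$ is a symmetric rack automorphism of $E(\mathcal{F}, \alpha)$ compatible with $\rho_{E(\mathcal{F}, \alpha)}$, using the module axioms and the cocycle identities (F1)--(F3). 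One subtlety worth flagging is that in Theorem~\ref{MainTheoremWells} the map $\Lambda_{[\alpha]}$ need not be a group homomorphism, so ``exactness'' at $\operatorname{Aut}(X, \rho_{X}) \times \operatorname{Aut}(A)$ is to be read set-theoretically as $\operatorname{Im}(\Gamma) = \Lambda_{[\alpha]}^{-1}(0)$; in the Corollary this issue disappears, because we have truncated the sequence exactly at the subgroup $\operatorname{Im}(\Gamma)$, so every surviving arrow is an honest homomorphism.
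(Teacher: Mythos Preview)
Your proposal is correct and matches the paper's intent: the paper states the Corollary without proof, treating it as an immediate consequence of Theorem~\ref{MainTheoremWells} together with the Proposition identifying $\operatorname{Im}(\Gamma) = \big(\operatorname{Aut}(X, \rho_{X}) \times \operatorname{Aut}(A)\big)_{[\alpha]} = \Lambda_{[\alpha]}^{-1}(0)$. Your write-up correctly supplies the bookkeeping the paper leaves implicit, including the observation that the stabiliser is a subgroup and that corestricting $\Gamma$ turns the four-term sequence into a genuine short exact sequence of group homomorphisms.
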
	
	
\begin{remark}\label{obstruction remark}
Suppose that $(X, \rho_{X})$ is a symmetric rack and $A$ is a $\mathbb{Z}(X, \rho_{X})$-module such that $H_{SR}^{2}((X,\rho_{X});A)=0$. Then, we conclude from Theorem \ref{MainTheoremWells} that any pair of automorphisms $(\zeta,\theta)  \in \operatorname{Aut}(X, \rho_{X}) \times\operatorname{Aut}(A)$ can be extended to an automorphism $\xi  \in \operatorname{Aut}_{A}(E(\mathcal{F}, \alpha), \rho_{E(\mathcal{F}, \alpha)})$. Thus, the obstruction to this kind of extension of a pair of automorphisms lies in the second symmetric rack cohomology.
\end{remark}

Let us illustrate the preceding theorem through two examples. In each of them, $A$ denotes a $\mathbb{Z}(X, \rho_{X})$-module with $A_x=\mathbb{Z}$, $\phi_{x, y}=\id$, $\psi_{x, y}=0$ and $\eta_{x}=-\id$ for all $x, y \in X$.

\begin{example} 
Consider the symmetric rack $(X, \rho_{X})$, where $X$ is any trivial rack with $|X|\geq 2$, that is, $x*y=x$ for all $x,y \in X$. Let $\rho_X$ be any non-trivial involution on $X$. Choose $(\zeta, \theta) \in \operatorname{Aut}(X, \rho_{X}) \times \operatorname{Aut}(A)$ such that $\zeta$ is any involution (for instance, we can take $\zeta = \rho_X$) and $\theta = -\id$.  In this scenario, two symmetric rack 2-cocycles $\beta$ and $\gamma$ are cohomologous if and only if  $\beta=\gamma$. Suppose that $\alpha: X \times X \rightarrow A$ is a map such that $\alpha_{x_0,y_0} \neq 0$ for some $x_0,y_0 \in X$ and $$\alpha_{\zeta(x),\zeta(y)}=\alpha_{x,y}=-\alpha_{\rho_X(x),y}=-\alpha_{x,\rho_X(y)}$$ for all $x,y\in X$. A straightforward check using Remark \ref{cocycleconditions} tells us that $\alpha \in Z_{SR}^{2}((X, \rho_{X}); A)$. But, we have
	$$
		{}^{(\zeta, \theta)}\alpha_{x, y} = \theta \big(\alpha_{\zeta^{-1}(x), \zeta^{-1}(y)} \big) = -\alpha_{\zeta(x), \zeta(y)}  = -\alpha_{x, y}
	$$
for all $x, y \in X$.	 Since $\alpha_{x_0,y_0} \neq 0$, we have $-\alpha \neq \alpha$, and hence ${}^{(\zeta, \theta)}\alpha$ and $\alpha$ are not cohomologous. This implies that $\Lambda_{[\alpha]}(\zeta,\theta) \neq 0$, and hence $(\zeta,\theta)$ cannot be extended to an automorphism lying in $\operatorname{Aut}_{A} \big(E(\mathcal{F}, \alpha), \rho_{E(\mathcal{F}, \alpha)} \big)$.
\par
For instance, we can take the trivial rack $X= \{ 0, 1 \}$ with $\rho_X(x)=x+1 \mod 2$ for all $x \in X$. Further, take $(\zeta, \theta) \in \operatorname{Aut}(X, \rho_{X}) \times \operatorname{Aut}(A)$ such that $\zeta = \id$ and $\theta = -\id$, and take $\alpha: X \times X \rightarrow A$ given by
	 $$	 \alpha_{0,0} = 1, \quad \alpha_{1,0} = -1, \quad \alpha_{0,1} = -1 \quad \textrm{and} \quad \alpha_{1, 1} = 1. $$
\end{example}

\begin{example}
Let $(X,\rho_X)$ be a symmetric rack such that $\rho_X$ is the identity map on $X$ (for example,  we can take $X=\mathbb{Z}_n$ with $x*y=2y-x \mod n$ for any $n \geq 2$. Then using condition \eqref{cocyclecondition2.1} of Remark \ref{cocycleconditions}, it is easy to see that any $\alpha \in Z_{SR}^{2}((X, \rho_{X}); A)$ satisfies $\alpha=0$, which further implies that $H_{SR}^{2} \big((X,\rho_{X});A \big)=0$. Thus, every pair of automorphisms $(\zeta,\theta) \in \operatorname{Aut}(X, \rho_{X}) \times\operatorname{Aut}(A)$ extends to an automorphism $\xi \in \operatorname{Aut}_{A}(E(\mathcal{F}, \alpha), \rho_{E(\mathcal{F}, \alpha)})$.

\end{example}

\medskip	

An analogue of Theorem \ref{MainTheoremWells} follows for symmetric quandles. 

\begin{theorem} \label{MainTheoremWellsQuandles}
Let $(X, \rho_{X})$ be a symmetric quandle, $A$ be a $\mathbb{Z}(X, \rho_{X})$-module, $\alpha \in Z_{S Q}^{2}((X, \rho_{X}) ; A)$ and $E(\mathcal{F}, \alpha)$ the abelian extension of $(X, \rho_{X})$ by $\mathcal{F}$ through $\alpha$. Then, there is an exact sequence of groups
$$ 0\to Z_{SQ}^{1} \big((X, \rho_{X});A \big)\to \operatorname{Aut}_{A} \big(E(\mathcal{F}, \alpha), \rho_{E(\mathcal{F}, \alpha)} \big) \stackrel{\Gamma}{\to}  \operatorname{Aut} (X, \rho_{X})\times\operatorname{Aut}(A) \stackrel{\Lambda_{[\alpha]}}{\to}  H_{SQ}^{2} \big((X,\rho_{X});A \big).$$
\end{theorem}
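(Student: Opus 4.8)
The plan is to carry over the proof of Theorem~\ref{MainTheoremWells} essentially verbatim, under the same standing hypothesis that the coefficients are constant ($A_x=A$, $\phi_{x,y}=\phi$, $\psi_{x,y}=\psi$, $\eta_x=\eta$), replacing the symmetric rack data by the corresponding symmetric quandle data throughout, and to flag the handful of places where the extra quandle axiom enters. First I would record that, since $A$ is a module over the symmetric quandle algebra $\mathbb{Z}(X,\rho_X)$ (so that $\phi+\psi=\id$ by (M9)) and $\alpha\in Z^2_{SQ}((X,\rho_X);A)$ (so that $\alpha_{x,x}=0$), the cocycle $\alpha$ together with $\mathcal{F}=(A,\phi,\psi,\eta)$ determines a dynamical cocycle for the symmetric \emph{quandle} $(X,\rho_X)$ in the sense of Theorem~\ref{dynamical cocycles vs symmetric cocycles}; hence $(E(\mathcal{F},\alpha),\rho_{E(\mathcal{F},\alpha)})$ is genuinely a symmetric quandle and not merely a symmetric rack. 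This is exactly what makes $\operatorname{Aut}_A(E(\mathcal{F},\alpha),\rho_{E(\mathcal{F},\alpha)})$, defined as before but now inside $\operatorname{Aut}(E(\mathcal{F},\alpha),\rho_{E(\mathcal{F},\alpha)})$ taken in the quandle category, the appropriate object; Lemma~\ref{subgrouplemma} then goes through word for word, since its proof uses only that the relevant maps are symmetric rack homomorphisms and the explicit formulas for composites and inverses.

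Next I would set up the left action of $\operatorname{Aut}(X,\rho_X)\times\operatorname{Aut}(A)$ on $Z^2_{SQ}$ and $H^2_{SQ}$ by ${}^{(\zeta,\theta)}\alpha_{x,y}=\theta(\alpha_{\zeta^{-1}(x),\zeta^{-1}(y)})$. The verification that ${}^{(\zeta,\theta)}\alpha$ satisfies the symmetric rack $2$-cocycle conditions of Remark~\ref{cocycleconditions} is identical to the rack case (it uses constancy of $\phi,\psi,\eta$ and equivariance of $\zeta$), and the additional quandle condition is immediate, namely ${}^{(\zeta,\theta)}\alpha_{x,x}=\theta(\alpha_{\zeta^{-1}(x),\zeta^{-1}(x)})=\theta(0)=0$; so ${}^{(\zeta,\theta)}\alpha\in Z^2_{SQ}((X,\rho_X);A)$. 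The same computation shows $B^2_{SQ}$ is preserved, and one observes that a coboundary $\beta_{x,y}=\lambda(\phi x-x*y+\psi y)$ restricted to the diagonal gives $\lambda(\phi x-x*x+\psi x)=\lambda((\phi+\psi-\id)x)=\lambda(0)=0$, so that $B^2_{SQ}\subseteq Z^2_{SQ}$ and the action descends to $H^2_{SQ}$. One then defines $\Gamma$ and the set-theoretic map $\Lambda_{[\alpha]}$ exactly as before, using the free transitive action of $H^2_{SQ}((X,\rho_X);A)$ on itself, and proves the identifications $\operatorname{Ker}(\Gamma)\cong Z^1_{SQ}((X,\rho_X);A)$ and $\operatorname{Im}(\Gamma)=(\operatorname{Aut}(X,\rho_X)\times\operatorname{Aut}(A))_{[\alpha]}=\Lambda_{[\alpha]}^{-1}(0)$ by transcribing the corresponding arguments; here it is worth noting that $D^{SQ}_1=D^{SR}_1$ (because $V_1=W_1=\emptyset$), so $C^1_{SQ}=C^1_{SR}$ and the explicit description of $1$-cocycles is literally the same as in the rack setting. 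Assembling these facts gives exactness at $Z^1_{SQ}$, at $\operatorname{Aut}_A(E(\mathcal{F},\alpha),\rho_{E(\mathcal{F},\alpha)})$, and at $\operatorname{Aut}(X,\rho_X)\times\operatorname{Aut}(A)$, which is the asserted four-term exact sequence.

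I do not expect a genuine obstacle here: every step of the rack proof is an identity in $\mathbb{Z}(X,\rho_X)$ followed by a diagram chase, and quotienting by the extra relators $W_n$ disturbs none of them. The two points that genuinely need to be articulated are that $E(\mathcal{F},\alpha)$ is a symmetric quandle (so that the group on the left of the sequence is the correct one) and that the normalization $\alpha_{x,x}=0$ is respected both by the $\operatorname{Aut}(X,\rho_X)\times\operatorname{Aut}(A)$-action and by coboundaries---both being immediate consequences of $\theta(0)=0$ and $\phi+\psi=\id$. If anything is mildly delicate it is bookkeeping: one must consistently work with the symmetric quandle algebra and the complex $C^*_{SQ}$, and check that $\mathcal{F}$ is an $(X,\rho_X)$-module in the quandle sense (so that (M9) is available), rather than silently reverting to the rack versions.
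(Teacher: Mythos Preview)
Your proposal is correct and takes essentially the same approach as the paper, which simply states that an analogue of Theorem~\ref{MainTheoremWells} follows for symmetric quandles without giving any further details. In fact, your write-up is considerably more thorough than the paper's treatment, as you explicitly identify the places (the normalization $\alpha_{x,x}=0$, the identity $\phi+\psi=\id$, and the equality $D^{SQ}_1=D^{SR}_1$) where the quandle hypothesis enters or where one must check that nothing changes.
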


As in Remark \ref{obstruction remark}, the obstruction to extension of automorphisms in abelian extensions of symmetric quandles lies in the second symmetric quandle cohomology.
\medskip

\begin{ack}
BK thanks CSIR for the PhD research fellowship. DS thanks IISER Mohali for the PhD research fellowship. MS is supported by the SwarnaJayanti Fellowship
grants DST/SJF/MSA-02/2018-19 and SB/SJF/2019-20/04.
\end{ack}

\section{Declaration}
The authors declare that there is no data associated to this paper and that there are no conflicts of interests.
\medskip


\begin{thebibliography}{HD}
	
	\bibitem{MR1994219} N. Andruskiewitsch and M. Gra\~{n}a, \textit{From racks to pointed Hopf algebras}, Adv. Math. 178 (2003), no.2, 177--243.
	

\bibitem{MR4282648} V.  Bardakov and M. Singh, \textit{Quandle cohomology, extensions and automorphisms}, J. Algebra 585 (2021), 558--591.

	\bibitem{MR2616383} J. M. Beck, \textit{Triples, algebras and cohomology}, Thesis (Ph.D.) Columbia University ProQuest LLC, Ann Arbor, MI, 1967. 115 pp. Republished in Repr. Theory Appl. Categ. (2003), no.2, 1--59.
		
	\bibitem{MR4476068} N. Cazet, \textit{Surface-link families with arbitrarily large triple point number}, Topology Appl. 319 (2022), Paper No. 108234, 12 pp.
		
		
	\bibitem{MR2255194} R. Fenn, C. Rourke and B. Sanderson, \textit{The rack space}, Trans. Amer. Math. Soc. 359 (2007), no. 2, 701--740.
	
	\bibitem{MR2945646} E. Hatakenaka and T. Nosaka, \textit{Some topological aspects of 4-fold symmetric quandle invariants of 3-manifolds}, Internat. J. Math. 23 (2012), no.7, 1250064, 31 pp.
	
	\bibitem{MR2155522} N. Jackson, \textit{Extensions of racks and quandles}, Homology Homotopy Appl. 7 (2005), no.1, 151--167.
	
	\bibitem{MR2890467} Y. Jang and K. Oshiro, \textit{Symmetric quandle colorings for spatial graphs and handlebody-links}, J. Knot Theory Ramifications (2012), no.4, 1250050, 16 pp.	
	
	\bibitem{Joyce1979} D. Joyce, \textit{An algebraic approach to symmetry with applications to knot theory}, Ph.D. Thesis, University of Pennsylvania, 1979, 127 pp.
	
	\bibitem{MR2371714} S. Kamada, 	\textit{Quandles with good involutions, their homologies and knot invariants}, Intelligence of low dimensional topology 2006, 101--108.
	
	\bibitem{MR3363811} S. Kamada,	\textit{Quandles and symmetric quandles for higher dimensional knots}, Knots in Poland III. Part III, 145--158.
	Banach Center Publ., 103 Polish Academy of Sciences, Institute of Mathematics, Warsaw, 2014.
	
	\bibitem{MR2657689} S. Kamada and K. Oshiro, \textit{Homology groups of symmetric quandles and cocycle invariants of links and surface-links}, Trans. Amer. Math. Soc. 362 (2010), no.10, 5501--5527.
	    
    \bibitem{KSS2024} B. Karmakar, D. Saraf and M. Singh, \textit{Generalised (co)homology of symmetric quandles over homogeneous Beck modules},     https://doi.org/10.48550/arXiv.2401.14143.
    		
	\bibitem{MR0672410} S. V. Matveev, \textit{Distributive groupoids in knot theory}, in Russian: Mat. Sb. (N.S.) 119 (1) (1982) 78--88, translated in Math. USSR Sb. 47 (1) (1984), 73--83.
	
	\bibitem{MR2821435} T. Nosaka, \textit{ 4-fold symmetric quandle invariants of 3-manifolds}, Algebr. Geom. Topol. 11 (2011), no.3, 1601--1648.
	
	\bibitem{MR2629767} K. Oshiro, \textit{Triple point numbers of surface-links and symmetric quandle cocycle invariants},  Algebr. Geom. Topol. 10 (2010), no.2, 853--865.
	
	\bibitem{MR4594919} M. Saito and E. Zappala, \textit{Extensions of augmented racks and surface ribbon cocycle invariants}, Topology Appl. 335 (2023), Paper No. 108555, 19 pp.
	
	\bibitem{CharlesWells71} C. Wells, \textit{Automorphisms of group extensions}, Trans. Amer. Math. Soc. 155 (1971), 189–194.
	
	\bibitem{MR2699808} J. Zablow, \textit{Loops, waves, and an ``algebra'' for Heegaard splittings}, Ph.D. thesis, City Univ. of New York, 1999. 
	
\end{thebibliography}
\end{document}